\documentclass[final,12pt]{colt2026} 

\title[Asymptotically optimal sequential change detection for bounded means]{Asymptotically optimal sequential change detection\\ for bounded means}
\usepackage{times}
\usepackage{comment}
\newcommand{\KL}{\mathrm{KL}}
\newcommand{\KLinf}{\mathrm{KL_{\mathrm{inf}}}}
\newcommand{\1}{\mathbbm{1}}
\newcommand{\EE}{\mathbb{E}}
\newcommand{\PP}{\mathbb{P}}
\newcommand{\RR}{\mathbb{R}}
\newcommand{\NN}{\mathbb{N}}

\usepackage{bbm}
\usepackage{enumitem}
\usepackage{mathrsfs}
\usepackage{comment}

\coltauthor{\Name{Ashwin Ram} \Email{aram2@andrew.cmu.edu}\\ 
\Name{Aaditya Ramdas} \Email{aramdas@cmu.edu}\\
 \addr Carnegie Mellon University}

\begin{document}

\maketitle

\begin{abstract}%
We consider the problem of quickest changepoint detection under the Average Run Length (ARL) constraint where the pre-change and post-change laws lie in composite families $\mathcal P$ and $\mathcal Q$ respectively. In such a problem, a massive challenge is characterizing the best possible detection delay when the ``hardest'' pre-change law in $\mathcal P$ depends on the unknown post-change law $Q\in\mathcal Q$. And typical simple-hypothesis likelihood-ratio arguments for Page-CUSUM and Shiryaev-Roberts do not at all apply here. To that end, we derive a universal sharp lower bound in full generality for any ARL-calibrated changepoint detector in the low type-I error ($\gamma\to\infty$ regime) of the order $\log(\gamma)/\KLinf(Q, \mathcal P)$. We show achievability of this universal lower bound by proving a tight matching upper bound (with the same sharp $\log\gamma$ constant) in the important bounded mean detection setting. In addition, for separated mean shifts, we also we derive a uniform minimax guarantee of this achievability over the alternatives.
\end{abstract}

\begin{keywords}%
  Quickest Changepoint Detection, Average Run Length, Detection Delay, Sequential Analysis, Bounded Mean Detection.%
\end{keywords}

\newcommand{\esssup}{\operatorname*{ess\,sup}}\newcommand{\CADD}{\mathcal{C}}\newcommand{\Lorden}{\mathcal{D}}
\section{Introduction}
Consider the following typical setting in sequential analysis: we observe a data stream, a change occurs, we detect it, and then we try to build detectors that will make this ``delay'' short. In his 1954 ``continuous inspection'' work \citep{Page1954}, Page formalized that very idea. Put simply, we accumulate more and more evidence that the data no longer come from the baseline distribution and then consequently stop when this evidence crosses a threshold. Such stopping rules were made as optimal-stopping objects so to speak by Shiryaev's Bayesian formulation \citep{Shiryaev1961, Shiryaev1963}. On the other hand, Lorden \citep{Lorden1971} and Pollak \citep{Pollak1985} gave us non Bayesian worst-case delay criteria, which are still prominent in modern-day streaming applications \citep{BassevilleNikiforov1993, PoorHadjiliadis2008, TartakovskyNikiforovBasseville2014}. Under the classic independent and identically distributed (i.i.d) model with a known pre-change law $f$ and known post-change law $g$, Moustakides proved that Page's CUSUM procedure is in fact exactly optimal for Lorden's criterion \citep{Moustakides1986}. And Pollak's CADD led to much better versions of the Shiryaev-Roberts (SR) procedure that obtain almost minimax performance \citep{Pollak1985, Pollak2009, PolunchenkoTartakovsky2010}. 

However, in practice assuming that the pre-change law and post-change law are single distributions is extremely oversimplified. For instance, sensor drift can make the pre-change law as general as any distribution with some property (e.g., bounded mean $\le m$), while the anomalies can be ``anything'' violating that property (e.g., mean $>m$). Especially even in such broad situations, practitioners may still want a stopping rule $T$ that guarantees a particular false-alarm rate. That issue is the core motivation of this paper, the setting of which we formalize at the set level. We take the no-change law to be i.i.d.\ $P$ for some unknown $P\in\mathcal{P}$ whereas given a change at time $k$, the data switches distribution to be i.i.d.\ $Q\in\mathcal Q$. We define the false alarm constraint as an ARL lower bound
\begin{equation}\label{eq:arl}
\inf_{P\in\mathcal P} \EE_{P^\infty}[T]\ge \gamma,    
\end{equation}
and the post-change delay is measured by Pollak's conditional average delay to detection,
\begin{equation}\label{eq:cadd}
\CADD_Q(T) := \sup_{k\ge 1} \EE_{k,Q}\left[(T-k+1)^+\ \middle|\ T\ge k\right], 
\end{equation}
which is always at most Lorden's worst-case conditional delay \citep{Lorden1971, Pollak1985}. However, when measuring information in these composite settings, we no longer use the basic $\KL(g\|f)$, but rather the least-favorable separation,
\begin{equation}\label{eq:klinfgap}
I(Q;\mathcal{P})\ :=\ \inf_{P\in\mathcal{P}} \KL(Q\|P).   
\end{equation}
Note that this $\KLinf$ is widely used in many domains, from robust hypothesis testing to distributionally robust QCD where inside an ambguity set, a least-favorable post-change law minimizes $\KL(\cdot\|P_0)$ \citep{MolloyFord2017, XieLiangVeeravalli2024}. With all this being said, our work distinguishes itself with several key contributions. First, we provide a fully general and pointwise information lower bound for composite pre-change classes. That is, for any ARL-calibrated family $\{T_\gamma\}$ and any alternative $Q$, we have that,
\begin{equation}\label{eq:mainlb}
\liminf_{\gamma\to\infty}\ \frac{\CADD_Q(T_\gamma)}{\log\gamma}\ \ge\ \frac{1}{I(Q;\mathcal{P})}.   
\end{equation}
In short, we prove the bound by first identifying a near least-favorable pre-change law $P_\delta$ that satisfies $\KL(Q\|P_\delta)\le I(Q;\mathcal{P})+\delta$. We then use a block argument (which is a consequence of \eqref{eq:arl}) to guarantee that we can find at least one window of time where the conditional null probability of stopping is at most $f/\gamma$ (where $f$ is the length of each block). We then show that under $Q$, the probability of stopping in that particular ``window'' is very small unless the log-likelihood random walk accumulates an atypical amount of evidence. But that's ruled out by the maximal strong law as we will see. More on all this later. Secondly, we provide a tight upper bound for the bounded mean model, by building a bounded mean detector that achieves this equality in \eqref{eq:mainlb}. That is, for every $Q\in\mathcal{Q}_m$ (letting $Q_m$ be the class of bounded alternatives with mean $>m$, and the nulls are those with mean $\le m$), we have that $\lim_{\gamma\to\infty}\frac{\CADD_Q(T^{\mathrm{BM}}_\gamma)}{\log\gamma} = \frac{1}{\mathrm{KL}_{\inf}(Q;m)}$. Here, $T^{\mathrm{BM}}_\gamma$ is the class of bounded mean ARL-calibrated detectors that we will detail. Finally, for separated alternatives, we obtain a uniform minimax constant $1$, showing sharpness in that case also.

\subsection*{Related Work}
Changepoint detection is often seen as starting from Page's CUSUM \citep{Page1954}, optimal for Lorden's criteria of worst-case conditional delay (when ($f,g$) are known) \citep{Moustakides1986, Ritov1990}. \citet{Lorden1971} proposed the criteria of $\sup_k\esssup\,\EE_{k}[(T-k+1)^+\mid\mathcal{F}_{k-1}]$, which is in fact so strict that even in situations where the changepoint is adversarially aligned with the sample path, it forces a detector to be uniformly fast. \citet{Pollak1985} proposed the CADD criteria \eqref{eq:cadd}, conditioning on survival until $k$ and naturally leading to SR-based stopping rules. There has since been a lot of work on SR-head-starting and initialization in a quasi-stationary way to approach minimax performance (and improve higher-order asymptotics) \citep{Pollak2009, PolunchenkoTartakovsky2010, TartakovskyPollakPolunchenko2012}. Moreover, there has been numbers of works that together present a beautiful unification of all these results, and give connections to Markovian extensions and renewal theory \citep{BassevilleNikiforov1993, PoorHadjiliadis2008, TartakovskyNikiforovBasseville2014}.

Looking at composite and robust changepoint detection, when the post-change parameters are unknown, that often will (and does in fact) necessitate generalized likelihood ratios or mixture detectors. \citet{Lai1995} established first order optimality when the window size grows slowly, \citet{Mei2006} studied unknown pre-change and post-change distribution parameters and derived asymptotically optimal procedures in exponential families, \citet{XieSiegmund2013} used mixture procedures in a multi-sensor setting, combining stream-wise generalized likelihood ratios. In decentralized communication, \citet{HadjiliadisZhangPoor2009} show that when each sensor runs CUSUM and transmits once (ie, one-shot communication), this can be asymptotically optimal with respect to Lorden. Even in non iid settings, there exist weighted and mixture SR procedures for composite post change hypotheses \citep{TartakovskyVeeravalli2005, PergamenchtchikovTartakovsky2019}. Interestingly, Huber-Strassen's capacity-based generalization of Neyman-Pearson seems to be the origin for least-favorable distributions (and minimax tests) \citep{HuberStrassen1973}. Today QCD that is robust to distributions will often involve amiguity sets (eg, Wasserstein balls that are around empirical post change samples), and in such cases the least-favorable post-change distribution is one minimizing $\KL(\cdot\|P_0)$ giving a ``robust'' CUSUM \citep{MolloyFord2017, XieLiangVeeravalli2024}.

The idea that betting-type nonnegative supermartingales can be seen as sequential evidence really starts from Ville \citep{Ville1939}, and was formalized later by \citet{Shafer2011}. \citet{Howard2020} developed the the idea of time-uniform boundary crossing via nonnegative supermartingales, while \citet{RufLarssonKoolenRamdas2023} generalized Ville's theorem to composite nulls, allowing us to do nonparametric anytime-valid inference. There has also been great amount work done on e-processes, with respect to surveying univeral inference operations with e-processes to merging sequential e-values into a single e-process, to providing specific optimality notions like growth rate optimality to deal with composite hypotheses \citep{RamdasWang2025, VovkWang2024, GrunwaldDeHeideKoolen2024}. One  crucial work in this area is the idea of e-detectors, which are sums of e-processes started consecutive times that provide nonasymptotic ARL control and nearly optimal detection delays for a wide range of nonparametric problems \citep{shin2024edetectors}. We've also seen very important reductions that connect sequential estimation to sequential changepoint detection \cite{ShekharRamdas2023}. With all this being said, we are the first work to prove such a lower bound like \eqref{eq:mainlb} holding for any composite $\mathcal{P}$ without restriction in the changepoint setting. In addition, we give a sharp exact $\log\gamma$ constant in the bounded mean setting (showing achievability of the universal lower bound).

The rest of this paper is organized as follows. We present our problem setup and universal lower bound in Section~\ref{sec:universallb}, which we then show achievability  in the bounded mean setting in Section~\ref{sec:bounded-mean}. Finally, in Appendix~\ref{sec:complete-proofs} and Appendix~\ref{sec:omittedproofs-two} we provide all the complete proofs.

\section{CADD Lower Bound: Sending $\gamma$ to $\infty$}\label{sec:universallb}

We observe a sequence $X_1,X_2,\dots$ on a filtered probability space $(\Omega,\mathcal F, (\mathcal F_n)_{n\ge1})$ where $\mathcal F_n:=\sigma(X_1, X_2,\dots,X_n)$. Let $\mathcal P$ be a non-empty class of pre-change distributions on the measurable space $(\mathcal X, \mathcal A)$. For $P\in\mathcal P$, write $P^\infty$ for the i.i.d. product measure on $\mathcal X^{\NN}$; in other words, this is the no-change law. Now, let $\mathcal Q$ be a nonempty class of post-change distributions on the measurable space $(\mathcal X, \mathcal A)$. This class may very well be composite/parametric (for example, $\mathcal Q = \{Q_\nu:\nu\in\mathcal V\}$); however, we will work directly at the set-level $\mathcal Q$. For a change time $k\in\{1,2,\dots\}$, a pre-change law $P\in\mathcal P$ and $Q\in\mathcal Q$, denote by
\(
\PP^{(P)}_{k,Q},
\)
the law under which $X_1, \dots, X_{k-1}$ are i.i.d. from $P$, and $X_k,X_{k+1},\dots$ are i.i.d from $Q$. We say that a detection rule is a stopping time $T$ w.r.t. the filtration $(\mathcal F_n)$. Now, we will pose the following remark on the post-change class and information regime. Here, we study asymptotics often as $\gamma\to\infty$ and thereby consider families $T_\gamma$ that satisfy the constraint for each $\gamma$.

Throughout, as explained, $\mathcal Q$ represents the post-change class. Whenever we invoke bounds involving $1/I(Q;\mathcal P)$, we implicitly restrict our attention to those $Q\in\mathcal Q$ for which $0<I(Q;\mathcal P)<\infty$. In addition, for uniform, worst-case over $Q$, statements, we will sometimes assume a positive information gap. Namely, $\underline I=\inf_{Q\in\mathcal Q} I(Q;\mathcal P)>0$. In other words, this intuitively means that the post-change class is uniformly separated from $\mathcal P$ in the $\mathrm{KL}_{\inf}$ sense. Now, in this section, before proceeding, we are first going to present our main theorem, a fully general CADD lower bound. 
\begin{theorem}\label{thm:cadd}
Let $\{T_\gamma\}_{\gamma>0}$ satisfy the ARL constraint $\inf_{P\in\mathcal P}\EE_{P^\infty}[T_\gamma]\ge\gamma$. Then for every post-change law $Q\in\mathcal Q$ with $0<I(Q;\mathcal P)<\infty$,
\[
\boxed{
\quad \liminf_{\gamma\to\infty}\ \frac{\mathcal C_Q(T_\gamma)}{\log\gamma}
\ \ge\ \frac{1}{I(Q;\mathcal P)}.\quad
}
\]
\end{theorem}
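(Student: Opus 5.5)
The plan is to reduce to a simple pre-change law and run a Lai-type change-of-measure argument on a well-chosen block. Fix $Q$ with $0<I:=I(Q;\mathcal P)<\infty$ and $\delta>0$. By definition of the infimum, pick $P_\delta\in\mathcal P$ with $\KL(Q\|P_\delta)\le I+\delta=:I_\delta<\infty$. In particular $Q\ll P_\delta$, so the log-likelihood ratio $\ell(x)=\log\frac{dQ}{dP_\delta}(x)$ is well defined $Q$-a.s. Under $Q$ it is integrable with mean $\KL(Q\|P_\delta)$, since its negative part is integrable ($\EE_Q[\ell^-]\le 1/e$). The ARL constraint gives in particular $\EE_{P_\delta^\infty}[T_\gamma]\ge\gamma$, and from then on I will work only with the pair $(P_\delta,Q)$. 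It suffices to show $\liminf_\gamma \CADD_Q(T_\gamma)/\log\gamma\ge (1-\varepsilon)/I_\delta$ for every $\varepsilon>0$, and then send $\varepsilon,\delta\to0$.

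\emph{Step 1 (block argument).} Set $m=m_\gamma=\lfloor(1-\varepsilon)\log\gamma/I_\delta\rfloor$ and choose a block length $f=f_\gamma$, for instance $f=\lceil\gamma^{\varepsilon/2}\rceil$ or some multiple of $m$, with $m=o(f)$ and $f=\gamma^{o(1)}\cdot$small. I claim there is $k\ge1$ with $P_\delta^\infty(T_\gamma\ge k)>0$ and
\[
P_\delta^\infty\bigl(T_\gamma<k+f\mid T_\gamma\ge k\bigr)\le f/\gamma .
\]
Suppose not. Then on every block $[jf+1,(j+1)f]$ the conditional probability of stopping exceeds $f/\gamma$. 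Chaining over blocks gives $P_\delta^\infty(T_\gamma> jf)\le(1-f/\gamma)^j$, hence $\EE_{P_\delta^\infty}[T_\gamma]\le f\sum_j(1-f/\gamma)^j=\gamma$. This is at best borderline, so I will run the argument with $2f/\gamma$ (or $\gamma$ replaced by $\gamma/2$) to get a strict contradiction. The cost is only a constant inside the logarithm.

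\emph{Step 2 (change of measure on the window).} Condition on $\mathcal F_{k-1}$ with $T_\gamma\ge k$. Since post-change data are i.i.d. and independent of the past, the conditional laws of $(X_k,\dots)$ under $\PP_{k,Q}$ and $P_\delta^\infty$ differ only through the likelihood ratio $\Lambda_n=\sum_{i=k}^{n}\ell(X_i)$. Fix $c=(1-\varepsilon/2)\log\gamma$. For the event $A=\{k\le T_\gamma<k+m\}$, split according to whether $\max_{n<k+m}\Lambda_n\ge c$:
\[
\PP_{k,Q}(A\mid T_\gamma\ge k)\le e^{c}\,P_\delta^\infty(A\mid T_\gamma\ge k)+Q^\infty\Bigl(\max_{j\le m}\textstyle\sum_{i=1}^j\ell(X_i)\ge c\Bigr).
\]
The first term is at most $e^c f/\gamma=\gamma^{-\varepsilon/2}f$, which tends to $0$ by the choice of $f$. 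Here I need $m\le f$ and $f\le\gamma^{\varepsilon/4}$, which is compatible with $m\sim\log\gamma$. The second term tends to $0$ by the maximal strong law of large numbers: $\max_{j\le m}S_j/m\to \KL(Q\|P_\delta)\le I_\delta$ a.s., while $c/m\to I_\delta(1-\varepsilon/2)/(1-\varepsilon)>I_\delta$. This step uses only integrability of $\ell$ under $Q$, with no higher moments.

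\emph{Step 3 (conclusion).} Since $\PP_{k,Q}(T_\gamma-k+1>m\mid T_\gamma\ge k)\to1$, we get $\CADD_Q(T_\gamma)\ge \EE_{k,Q}[(T_\gamma-k+1)^+\mid T_\gamma\ge k]\ge m(1-o(1))$, which yields the claimed bound.

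The main obstacle is Step 1 together with the bookkeeping in Step 2. The window found in Step 1 must have length $f$ at least the target delay $m$, yet $f$ must stay subpolynomial enough that $e^c f/\gamma\to0$. The conditioning is also delicate, because $\{T_\gamma\ge k\}\in\mathcal F_{k-1}$ has the same pre-change law under both measures. This is what lets the change of measure be done conditionally, and I would verify it carefully.
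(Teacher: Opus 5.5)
Your proposal is correct and is essentially the paper's proof: the near-minimizer $P_\delta$, a block with conditional null stopping mass at most $f/\gamma$ forced by the ARL constraint (you get it by hazard chaining, the paper by averaging $x_r/y_r$ with weights $y_r$), a change of measure split at $c=b\log\gamma$ with $b<1$, the maximal SLLN for the high-evidence term, and Markov's inequality. The only wrinkle is the block length: your example $f=\lceil\gamma^{\varepsilon/2}\rceil$ gives $e^{c}f/\gamma\not\to0$, but your later constraint $m\le f\le\gamma^{\varepsilon/4}$ is correct, and the simplest admissible choice is $f=m$, exactly as in the paper (your chaining also already yields the strict inequality $\EE_{P_\delta^\infty}[T_\gamma]<\gamma$ once $f<\gamma$, so the factor $2$ is unnecessary).
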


\begin{corollary}\label{cor:minimax-cadd-lb}
Let $\{T_\gamma\}_{\gamma>0}$ satisify the ARL constraint, $\inf_{P\in\mathcal P}\EE_{P^\infty}[T_\gamma]\ge \gamma$ for all $\gamma>0$. Assume $\mathcal Q$ contains at least one $Q$ with $0<I(Q;\mathcal P)<\infty$. Then,
\[
\boxed{
\quad \liminf_{\gamma\to\infty}\ \sup_{Q\in\mathcal Q}\ \frac{I(Q;\mathcal P)\,\mathcal C_Q(T_\gamma)}{\log\gamma} \ \ge\ 1.\quad
}
\]
\end{corollary}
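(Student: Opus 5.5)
The statement to prove is Corollary~\ref{cor:minimax-cadd-lb}, and the plan is to deduce it directly from Theorem~\ref{thm:cadd} applied to a single fixed alternative. By assumption we can pick $Q_0\in\mathcal Q$ with $0<I(Q_0;\mathcal P)<\infty$ and fix it for the rest of the argument. For every $\gamma>0$ the supremum over $\mathcal Q$ dominates the value at $Q_0$, so
\[
\sup_{Q\in\mathcal Q}\frac{I(Q;\mathcal P)\,\mathcal C_Q(T_\gamma)}{\log\gamma}\ \ge\ \frac{I(Q_0;\mathcal P)\,\mathcal C_{Q_0}(T_\gamma)}{\log\gamma},
\]
at least for $\gamma>1$, where $\log\gamma>0$. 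That restriction is harmless because only $\gamma\to\infty$ matters.

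A small bookkeeping point concerns the other $Q\in\mathcal Q$. Some of them may have $I(Q;\mathcal P)=0$ or $=\infty$, and the product $I\cdot\mathcal C_Q$ could then be ill-defined, for instance $0\cdot\infty$. I will handle this in one of two ways. The first is to read the supremum as taken over the admissible $Q$ with $0<I<\infty$, as the paper's convention after the setup indicates. The second is to use the convention that every term is a nonnegative element of $[0,\infty]$. Either way the displayed inequality remains valid, since all terms are nonnegative and $Q_0$ is one of the admissible indices.

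Next I take $\liminf_{\gamma\to\infty}$ on both sides. Since $I(Q_0;\mathcal P)$ is a finite positive constant, it factors out of the liminf. Theorem~\ref{thm:cadd} applies because the family $\{T_\gamma\}$ satisfies the ARL constraint for all $\gamma$, and it gives
\[
\liminf_{\gamma\to\infty}\frac{\mathcal C_{Q_0}(T_\gamma)}{\log\gamma}\ \ge\ \frac{1}{I(Q_0;\mathcal P)}.
\]
Multiplying this by $I(Q_0;\mathcal P)$ yields a lower bound of $1$, which is exactly the claim.

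There is no real obstacle here. All of the substance lives in Theorem~\ref{thm:cadd}, and the corollary only combines monotonicity of the supremum with a scaling of the liminf by a positive finite constant. The one thing to check carefully is that this scaling is legitimate, which requires $0<I(Q_0;\mathcal P)<\infty$ so that the constant is neither zero nor infinite. The hypothesis on $\mathcal Q$ is there precisely to guarantee this.
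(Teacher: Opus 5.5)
Your proposal is correct and follows the paper's proof: fix an admissible $Q_0$ with $0<I(Q_0;\mathcal P)<\infty$, bound the supremum below by the $Q_0$ term, and take $\liminf_{\gamma\to\infty}$ using Theorem~\ref{thm:cadd}. Your extra remarks on restricting to $\gamma>1$ and on conventions for other $Q$ with $I(Q;\mathcal P)\in\{0,\infty\}$ are harmless refinements that the paper leaves implicit.
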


\noindent 
Notice that Theorem~\ref{thm:cadd} is a pointwise lower bound in $Q$. On the other hand, Corollary~\ref{cor:minimax-cadd-lb} is a minimax consequence, as it is worse case in $Q$. However, it does not itself imply that $\frac{I(Q;\mathcal P)\mathcal C_Q(T_\gamma)}{\log\gamma}\to 1$ uniformly over all $Q\in\mathcal Q$. As we will see, getting matching uniform upper bounds will necessarily take some additional structure. In particular, we will provide such a uniform achievability result in Section~\ref{sec:bounded-mean} for separated bounded-mean alternatives. More on that later. At this point, we are going to present a series of statements and lemmas that allowed us to develop these bounds. Consider a particular $Q\in\mathcal Q$ with $I(Q,\mathcal P)\in(0,\infty)$.  The first idea we will make clear is specifically for the $\KL$ so that we may work under a particular law. Namely, for every $\delta\in(0,1)$ there exists a $P_{\delta}\in\mathcal P$ such that $0<\KL(Q\|P_\delta)\le I(Q;\mathcal P)+ \delta<\infty.$ In particular, $Q\ll P_\delta$. The reason is because by definition of the infimum, there exists a sequence $(P_n)_{n\ge1}\subset\mathcal P$ with $\KL(Q\|P_n)\downarrow I(Q;\mathcal P)$. Now since $I(Q;\mathcal P)\in(0,\infty)$, we can easily choose $n_\delta$ large enough so that $\KL(Q\|P_{n_\delta}) \le I(Q;\mathcal P)+\delta$. Now, set $P_\delta:=P_{n_\delta}$. Clearly, because $\KL(Q\|P_\delta)<\infty$, necessarily $Q\ll P_\delta$ must follow. Hence $I(Q;\mathcal P)>0$ implies $0<\KL(Q\|P_\delta)$. 

From this point onward, we will fix the pre-change law $\PP^{(P_\delta)}_{k,Q}$ and abbreviate $\PP_{k,Q}:=\PP^{(P_\delta)}_{k,Q}$ and $\PP_\infty:=\PP_{P_\delta^\infty}$. Further, define $\ell_i:=\log\frac{dQ}{dP_\delta}(X_i)$. With the convention that $L_{k:k-1}:=0$, let $L_{k:n}:=\sum_{i=k}^n \ell_i$. Also, let $\mu_\delta:=\KL(Q\|P_\delta)\in(0,\infty)$ and $I_\delta:=I(Q;\mathcal P)+\delta$. It's easy to then see that $\EE_Q[|\ell_1|]<\infty$, and under $\PP_{k,Q}$ the variables $(\ell_i)_{i\ge k}$ are i.i.d.\ with $\EE_{k,Q}[\ell_i]=\mu_\delta$ and $\EE_{k,Q}[|\ell_i|]<\infty$ for $i\ge k$. We can argue why this is the case as follows. First, write $\ell_1=\ell_1^+-\ell_1^-$ with $\ell_1^\pm\ge0$. We know from Lebesgue definition of expectation that $\EE_Q[\ell_1]\in\RR$ if and only if $\EE_Q[\ell_1^+]$ and $\EE_Q[\ell_1^-]$ are finite. Here, $\EE_Q[\ell_1]=\mu_\delta\in(0,\infty)$ so it follows that $\EE_Q[\ell_1^\pm]<\infty$ and $\EE_Q[|\ell_1|]=\EE_Q[\ell_1^+]+\EE_Q[\ell_1^-]<\infty$. Under $\PP_{k,Q}$ the post-change segment $(X_i)_{i\ge k}$ is i.i.d.\ with law $Q$. Hence $(\ell_i)_{i\ge k}$ are i.i.d.\ with the same distribution as $\ell_1$ under $Q$, implying $\EE_{k,Q}[\ell_i]=\mu_\delta$ and $\EE_{k,Q}[|\ell_i|]=\EE_Q[|\ell_1|]<\infty$ for all $i\ge k$. Given all these, we also need to specify measures under the pre-change and post-change laws, which is exactly the motivation for our prefix law equality. To such an end, the prefix and post-change segment are independent, which we also formalize in the following lemma.

\begin{lemma}\label{lem:prefix}
For any $k\ge1$ and any $G\in\mathcal F_{k-1}$, $\PP_{k,Q}(G)=\PP_\infty(G)$. Furthermore, under $\PP_{k,Q}$, the event $\{T\ge k\}\in\mathcal F_{k-1}$ is independent of
$\sigma(X_k,\dots,X_{k+m})$ for every $m\ge0$. In particular, for any event
$E\in\sigma(X_k,\dots,X_{k+m})$ with $\PP_{k,Q}(T\ge k)>0$,
\(
\PP_{k,Q}(E\mid T\ge k)=\PP_{k,Q}(E).
\)
\end{lemma}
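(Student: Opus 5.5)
The argument works at the level of product measures on $\mathcal X^{\NN}$. By construction, under $\PP_{k,Q}$ the coordinates $X_1,\dots,X_{k-1}$ are i.i.d.\ $P_\delta$, the coordinates $X_k,X_{k+1},\dots$ are i.i.d.\ $Q$, and the two blocks are mutually independent. In other words, $\PP_{k,Q}=P_\delta^{\otimes(k-1)}\otimes Q^{\otimes\NN}$. Similarly, $\PP_\infty=P_\delta^{\otimes(k-1)}\otimes P_\delta^{\otimes\NN}$.

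For the first claim, take $G\in\mathcal F_{k-1}=\sigma(X_1,\dots,X_{k-1})$. Then $G=\{(X_1,\dots,X_{k-1})\in B\}$ for some $B\in\mathcal A^{\otimes(k-1)}$ (Doob--Dynkin, or just the definition of the generated $\sigma$-field of a random vector). Hence both $\PP_{k,Q}(G)$ and $\PP_\infty(G)$ equal $P_\delta^{\otimes(k-1)}(B)$, because the marginal of each measure on the first $k-1$ coordinates is $P_\delta^{\otimes(k-1)}$. For $k=1$ the $\sigma$-field $\mathcal F_0$ is trivial and the claim is immediate.

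For the second claim, first note that $\{T\ge k\}=\{T\le k-1\}^c\in\mathcal F_{k-1}$ since $T$ is a stopping time. Now take $E\in\sigma(X_k,\dots,X_{k+m})$. Write $G=\{(X_1,\dots,X_{k-1})\in B\}$ and $E=\{(X_k,\dots,X_{k+m})\in C\}$. By the product structure, $\PP_{k,Q}(G\cap E)=P_\delta^{\otimes(k-1)}(B)\,Q^{\otimes(m+1)}(C)=\PP_{k,Q}(G)\,\PP_{k,Q}(E)$. This holds directly for rectangles, and extending to general $B,C$ is routine via a $\pi$--$\lambda$ argument if one prefers to start from cylinder sets. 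Dividing by $\PP_{k,Q}(T\ge k)>0$ then gives $\PP_{k,Q}(E\mid T\ge k)=\PP_{k,Q}(E)$.

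Nothing here is a real obstacle. The only point requiring care is being explicit that $\PP_{k,Q}$ is defined as a product measure with independent prefix and suffix blocks, and that every $\mathcal F_{k-1}$-event is a preimage under the prefix vector. With that stated, both claims follow immediately.
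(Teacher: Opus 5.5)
Your proof is correct and follows essentially the same route as the paper: both rest on the marginal of the first $k-1$ coordinates being $P_\delta^{\otimes(k-1)}$ under both $\PP_{k,Q}$ and $\PP_\infty$, and on $\{T\ge k\}$ being a prefix event that is independent of the i.i.d.\ $Q$ suffix under the product structure of $\PP_{k,Q}$. The only cosmetic difference is that you observe every $G\in\mathcal F_{k-1}$ is already of the form $\{(X_1,\dots,X_{k-1})\in B\}$, which makes the paper's $\pi$--$\lambda$ step redundant.
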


\noindent It's often also useful to get an exact expression for the density at an any particular time, which is the motivation for the following lemma, which gives us the exact density at time $n$. In addition, our lemma also presents important change of measure facts.
\begin{lemma}\label{lem:density}
For each $n\ge1$ we have that $\PP_{k,Q}|_{\mathcal F_n}\ll \PP_\infty|_{\mathcal F_n}$. In particular,

\[
\frac{d\PP_{k,Q}|_{\mathcal F_n}}{d\PP_\infty|_{\mathcal F_n}}
=
\begin{cases}
1, & n<k,\\[2pt]
\exp(L_{k,n}), & n\ge k.
\end{cases}
\]
Further, if $A\in\mathcal F_T$, then $\PP_{k,Q}(A) = \EE_\infty\left[e^{L_{k,T}}\1_A\right]$. In addition, take a particular $k\ge1$ with $\PP_\infty(T\ge k)>0$. Then it follows that,
\[
\PP_{k,Q}(A\mid T\ge k)\ =\
\EE_\infty\!\left[e^{L_{k,T}}\1_A\ \middle|\ T\ge k\right].
\]
\end{lemma}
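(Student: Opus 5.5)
We prove Lemma~\ref{lem:density} in three steps: the finite-time density, then optional stopping at $T$, then conditioning on $\{T\ge k\}$.

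\textbf{Step 1: the density on $\mathcal F_n$.} Under both $\PP_{k,Q}$ and $\PP_\infty$ the coordinates $X_1,\dots,X_n$ are independent. Hence the restriction of each measure to $\mathcal F_n$ is a product law on $\mathcal X^n$.
\begin{itemize}
\item For $n<k$ the two products coincide, both equal to $P_\delta^{\otimes n}$, so the derivative is $1$.
\item For $n\ge k$, $\PP_{k,Q}|_{\mathcal F_n}=P_\delta^{\otimes(k-1)}\otimes Q^{\otimes(n-k+1)}$ and $\PP_\infty|_{\mathcal F_n}=P_\delta^{\otimes n}$.
\end{itemize}
Since $Q\ll P_\delta$, established above from $\KL(Q\|P_\delta)<\infty$, absolute continuity holds coordinatewise and therefore for the product. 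The Radon--Nikodym derivative of a product of measures with respect to a product of measures is the product of the coordinate derivatives, by Fubini/Tonelli applied on rectangles plus a $\pi$-$\lambda$ argument. This gives $\prod_{i=k}^n \frac{dQ}{dP_\delta}(X_i)=\exp(L_{k:n})$.

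One technical point: $\frac{dQ}{dP_\delta}$ may vanish on a $P_\delta$-positive set. There $\ell_i=-\infty$, so we read $\exp(L_{k:n})=0$, which is harmless. The identity holds as nonnegative extended-real functions.

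\textbf{Step 2: optional stopping at $T$.} Let $A\in\mathcal F_T$. Decompose
\[
A=\bigcup_{n}\bigl(A\cap\{T=n\}\bigr)\ \cup\ \bigl(A\cap\{T=\infty\}\bigr),
\qquad A\cap\{T=n\}\in\mathcal F_n .
\]
By Step 1, for each finite $n$,
\[
\PP_{k,Q}(A\cap\{T=n\})=\EE_\infty\!\left[e^{L_{k:n}}\1_{A\cap\{T=n\}}\right],
\]
with the convention $L_{k:n}=0$ for $n<k$ (the paper's $L_{k:k-1}=0$ extended). Summing over $n$ by monotone convergence gives $\PP_{k,Q}(A\cap\{T<\infty\})=\EE_\infty[e^{L_{k:T}}\1_{A\cap\{T<\infty\}}]$.

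\textbf{Main obstacle: the event $\{T=\infty\}$.} Here the identity need not hold, because the infinite product measures are mutually singular. The natural fix is to state it for $A\subseteq\{T<\infty\}$, or to interpret $\mathcal F_T$-events on $\{T<\infty\}$. In the intended application this is automatic: $T_\gamma<\infty$ $\PP_{k,Q}$-a.s. whenever $\CADD_Q$ is finite, and otherwise the lower bound is trivial. I would add this caveat explicitly rather than assert the identity on $\{T=\infty\}$.

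\textbf{Step 3: the conditional identity.} Since $\{T\ge k\}=\{T\le k-1\}^c\in\mathcal F_{k-1}$, Lemma~\ref{lem:prefix} (or Step 1 with $n=k-1$) gives $\PP_{k,Q}(T\ge k)=\PP_\infty(T\ge k)>0$. Apply Step 2 to $A\cap\{T\ge k\}$, which belongs to $\mathcal F_T$ because $\{T\ge k\}\in\mathcal F_{k-1}$ and $\{T\ge k\}\cap\{T=n\}\in\mathcal F_n$. Dividing both sides by the common value $\PP_{k,Q}(T\ge k)=\PP_\infty(T\ge k)$ yields the conditional formula.
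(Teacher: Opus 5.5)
Your proof is correct and follows the paper's route: the product Radon--Nikodym derivative on $\mathcal F_n$, then summation over the disjoint events $A\cap\{T=n\}$ by monotone convergence, then division by $\PP_{k,Q}(T\ge k)=\PP_\infty(T\ge k)$. Your caveat about $\{T=\infty\}$ is warranted: the paper's decomposition $\1_A=\sum_{n\ge1}\1_{A\cap\{T=n\}}$ tacitly assumes $T<\infty$ on $A$ (the identity fails, e.g., for $T\equiv\infty$ and $A=\Omega$), but the proof of Theorem~\ref{thm:cadd} only uses events contained in $A_k\subseteq\{T<\infty\}$, and that containment, rather than finiteness of $\CADD_Q$, is the cleanest reason nothing downstream breaks.
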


\noindent We also provide a quick lemma for the maximal Strong Law (SLLN) and a corollary that will be useful in our proofs. 
\begin{lemma}\label{lem:maxSLLN}
Let $(Y_i)_{i\ge1}$ be i.i.d.\ with $\EE[|Y_1|]<\infty$ and mean $\mu\ge0$. With $S_m:=\sum_{i=1}^m Y_i$,
\[
\limsup_{n\to\infty}\frac{1}{n}\max_{1\le m\le n}S_m\ = \mu\qquad\text{a.s.}
\]
Consequently, if $b_n$ satisfies $b_n/n\to \mu+\eta$ for some $\eta>0$, then
\[
\PP\!\left(\max_{1\le m\le n}S_m\ >\ b_n\right)\ \longrightarrow\ 0.
\]
\end{lemma}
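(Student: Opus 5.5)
We prove the two claims of Lemma~\ref{lem:maxSLLN} in order. For the almost sure statement, we use the ordinary strong law, $S_m/m\to\mu$ a.s., together with the fact that the running maximum of a sequence with linear growth rate $\mu\ge0$ grows at the same rate. Fix an $\omega$ in the probability-one event where $S_m/m\to\mu$.

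For the lower bound, $\max_{1\le m\le n}S_m\ge S_n$, so
\[
\liminf_{n\to\infty}\frac1n\max_{1\le m\le n}S_m\ \ge\ \mu .
\]

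For the upper bound, fix $\epsilon>0$ and choose $N=N(\omega)$ such that $S_m\le(\mu+\epsilon)m$ for all $m\ge N$. For $n\ge N$ we split the maximum into the indices below and above $N$:
\[
\max_{1\le m\le n}S_m\ \le\ \max\Bigl(\max_{1\le m<N}S_m,\ (\mu+\epsilon)n\Bigr).
\]
Here we used $\mu+\epsilon>0$ (this is where $\mu\ge0$ enters) to replace $(\mu+\epsilon)m$ by $(\mu+\epsilon)n$ for $N\le m\le n$. The first term is a finite constant $C(\omega)$ that does not depend on $n$. Dividing by $n$ and letting $n\to\infty$ gives $\limsup\le\mu+\epsilon$. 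Since $\epsilon$ was arbitrary, the limsup is at most $\mu$. Combining the two bounds shows that the limit exists and equals $\mu$ almost surely, which is stronger than the stated limsup identity.

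For the consequence, set $M_n:=\max_{m\le n}S_m$. The first part gives $M_n/n\to\mu$ a.s., hence in probability. Since $b_n/n\to\mu+\eta$, for all large $n$ we have $b_n/n\ge\mu+\eta/2$. Therefore
\[
\PP(M_n>b_n)\ \le\ \PP\bigl(M_n/n-\mu>\eta/2\bigr)\ \longrightarrow\ 0 .
\]
Alternatively, the indicators $\1\{M_n>b_n\}$ tend to $0$ a.s., and dominated convergence gives the same conclusion.

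The only delicate point is the upper bound for the running maximum. It relies on the random but finite index $N(\omega)$, and the early maximum $\max_{m<N}S_m$ has to be treated as a constant that is washed out by dividing by $n$. No moment condition beyond $\EE|Y_1|<\infty$ is needed, since that is exactly what the strong law requires.
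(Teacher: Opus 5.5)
Your proof is correct and follows essentially the same route as the paper. Both arguments use the strong law, split the running maximum at the random index $N(\omega)$, absorb the early maximum into a constant that vanishes after dividing by $n$, and then deduce the probability statement from the almost sure one. Your write-up is slightly more careful in two places. You supply the lower bound $\max_{m\le n}S_m\ge S_n$, which the paper omits. You also use the margin $b_n/n\ge\mu+\eta/2$, whereas the paper asserts $(\mu+\eta)n\le b_n$, which does not follow from $b_n/n\to\mu+\eta$ alone.
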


\noindent We have now all the main ingredients needed for the general CADD lower bound. Let us recap some notation and measurability conditions once again for clarity. For integers $k\le n$ let $L_{k:n}:=\sum_{i=k}^n \ell_i$ with the convention, $L_{k:k-1}=0.$ For $n<k$ we also set $L_{k:n}:=0$ as it is an empty sum. For fixed integers $k$ and $f\in\NN$, we write the event of the alarm occurring in the next $f$ steps after changepoint $k$ as,
\(
A_k:=\{k\le T\le k+f-1\}.
\)
Because $T$ is a stopping time, we know that $\{T=n\}\in\mathcal F_n$ for each $n$, so we have
\(
A_k\cap\{T=n\}=\{k\le n\le k+f-1\}\cap\{T=n\}\in\mathcal F_n,
\)
hence $A_k\in\mathcal F_T$.  Similarly, for any $c\in\RR$,
\(
\{L_{k,T}\le c\}\cap\{T=n\}=\{L_{k:n}\le c\}\cap\{T=n\}\in\mathcal F_n,
\)
so $\{L_{k,T}\le c\}\in\mathcal F_T$.  Finally, $\{T\ge k\}\in\mathcal F_{k-1}$, since $\{T\le k-1\}\in\mathcal F_{k-1}$, and also $\{T\ge k\}\in\mathcal F_T$ because on $\{T=n\}$ it equals $\{n\ge k\}\cap\{T=n\}\in\mathcal F_n$. We will now define a fact that relates the mass of a `block' to the survival at its left endpoint. In other words, we show that the ARL constraint necessitates that there exists at least one block with small conditional null mass. 
\begin{lemma}\label{lem:arl-block}
Let $T$ be any stopping time with $\EE_\infty[T]\ge \gamma$. Fix $f\in\NN$ and partition $\NN$ into disjoint blocks $C_r:=\{(r-1)f+1,\dots,rf\}$, $r\ge1$.
Set $x_r:=\PP_\infty(T\in C_r)$ and $y_r:=\PP_\infty(T\ge (r-1)f+1)$. Then,
\[
\sum_{r\ge1} x_r =1,\qquad \sum_{r\ge1} y_r\ \ge\ \frac{\EE_\infty[T]}{f}\ \ge\ \frac{\gamma}{f}.
\]
Moreover, there exists $r^\star$ with $y_{r^\star}>0$ such that,
\[
\frac{x_{r^\star}}{y_{r^\star}} \le\ \frac{f}{\gamma}.
\]
Equivalently, there exists $k^\star:=(r^\star -1)f+1$ such that for $A_{k^\star}:=\{k^\star\le T\le k^\star+f-1\}$, 
\[
\PP_\infty(A_{k^\star}\mid T\ge k^\star)\ \le\ \frac{f}{\gamma}.
\]
\end{lemma}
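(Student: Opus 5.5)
The argument has two parts. First come the two summation identities; the existence of a good block then follows by an averaging argument.

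For the first identity, the blocks $C_r$ partition $\NN$. If $\PP_\infty(T<\infty)=1$, then $\sum_r x_r=\PP_\infty(T<\infty)=1$. This holds whenever $\EE_\infty[T]<\infty$. If $\EE_\infty[T]=\infty$ but $T$ is still a.s. finite, the identity is unaffected. If $T=\infty$ with positive probability, I would read the statement as $\sum_r x_r\le 1$, which is all the rest of the argument needs.

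For the second inequality, use the tail-sum formula $\EE_\infty[T]=\sum_{n\ge1}\PP_\infty(T\ge n)$ and group the terms by block. For $n\in C_r$ we have $n\ge (r-1)f+1$, so $\PP_\infty(T\ge n)\le y_r$. Each block has $f$ elements, so
\[
\EE_\infty[T]=\sum_{r\ge1}\sum_{n\in C_r}\PP_\infty(T\ge n)\le f\sum_{r\ge1}y_r,
\]
which gives $\sum_r y_r\ge \EE_\infty[T]/f\ge\gamma/f$.

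For the existence claim, argue by contradiction. Suppose every $r$ with $y_r>0$ satisfies $x_r>(f/\gamma)\,y_r$. If $y_r=0$, then $T\ge (r-1)f+1$ fails a.s. In particular $T\in C_r$ fails a.s., so $x_r=0$, and such indices contribute nothing to either sum. We have $y_1=1>0$, so the set of indices with $y_r>0$ is nonempty.

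Summing over these indices gives
\[
1\ge\sum_r x_r>\frac f\gamma\sum_r y_r\ge \frac f\gamma\cdot\frac\gamma f=1,
\]
a contradiction. The strict inequality is legitimate provided at least one term is strict and the right-hand series is finite. If $\sum_r y_r=\infty$, the right side is infinite, and the contradiction is immediate.

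One subtlety remains. Strictness term by term only yields a strict inequality for the sum if the sums converge or at least one term is strict; both hold here, since $y_1>0$. So some $r^\star$ with $y_{r^\star}>0$ satisfies $x_{r^\star}\le (f/\gamma)\,y_{r^\star}$.

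Finally, set $k^\star=(r^\star-1)f+1$. Then $\{T\in C_{r^\star}\}=A_{k^\star}\subseteq\{T\ge k^\star\}$, so
\[
\PP_\infty(A_{k^\star}\mid T\ge k^\star)=\frac{x_{r^\star}}{y_{r^\star}}\le\frac f\gamma .
\]

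The only delicate point is the bookkeeping: handling indices with $y_r=0$ and the possibly infinite sum $\sum_r y_r$. Neither is a real obstacle.
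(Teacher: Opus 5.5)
Your proof is correct and follows the paper's argument. The block-grouping of the tail-sum formula is identical. Your contradiction (summing $x_r > (f/\gamma)\,y_r$ over the indices with $y_r>0$) is the contrapositive of the paper's observation that the $y_r$-weighted average of the ratios $x_r/y_r$ equals $1/\sum_j y_j \le f/\gamma$.

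Your bookkeeping is slightly more careful than the paper's in three places:
\begin{enumerate}
\item You handle $\sum_r y_r=\infty$, where the paper's normalized weights degenerate.
\item You never need the minimum over infinitely many ratios to be attained.
\item You note that $\sum_r x_r=1$ requires $\PP_\infty(T<\infty)=1$, and that $\sum_r x_r\le 1$ suffices otherwise.
\end{enumerate}
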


\noindent Finally, we present a lemma that gives intuition on the asymptotics of parameters.
\begin{lemma}\label{lem:par-asymp}
Take a particular but arbitrary $\varepsilon,\delta\in(0,1)$; set $I_\delta:=I(Q;\mathcal P)+\delta$ and 
$\mu_\delta:=\KL(Q\|P_\delta)\le I_\delta$. Then define,
\[
f_\gamma = \Big\lfloor \frac{(1-\varepsilon)\log\gamma}{I_\delta}\Big\rfloor,\qquad
c_\gamma:=b\log\gamma\quad\text{for any }b\text{ with }
\frac{(1-\varepsilon)\mu_\delta}{I_\delta}\ <\ b\ <\ 1.
\]
Then, as $\gamma\to\infty$, 
\[
\frac{f_\gamma}{\log\gamma}\to \frac{1-\varepsilon}{I_\delta},\qquad
\frac{c_\gamma}{f_\gamma}\to \frac{b}{1-\varepsilon}\,I_\delta>\mu_\delta,\qquad
\frac{e^{c_\gamma}f_\gamma}{\gamma}\to 0.
\]
In particular, there exist $\eta>0$ and $\gamma_0$ such that $\frac{c_\gamma}{f_\gamma}\ge \mu_\delta+\eta$  for all $\gamma\ge\gamma_0$.
\end{lemma}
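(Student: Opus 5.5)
The claim is elementary asymptotics, so I will verify each of the three limits directly from the definitions and then extract the uniform gap $\eta$.

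\textbf{The limit $f_\gamma/\log\gamma$.} Write $a:=(1-\varepsilon)/I_\delta>0$. Since $a\log\gamma-1<f_\gamma\le a\log\gamma$, dividing by $\log\gamma\to\infty$ gives $f_\gamma/\log\gamma\to a$. In particular $f_\gamma\to\infty$, so $f_\gamma\ge1$ for large $\gamma$ and $f_\gamma\in\NN$ there. This matters because Lemma~\ref{lem:arl-block} needs $f\in\NN$.

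\textbf{The limit $c_\gamma/f_\gamma$ and the gap $\eta$.} Write
\[
\frac{c_\gamma}{f_\gamma}=\frac{b\log\gamma}{\log\gamma}\cdot\frac{\log\gamma}{f_\gamma},
\]
so $c_\gamma/f_\gamma\to b/a=\frac{b}{1-\varepsilon}I_\delta$. The choice $b>(1-\varepsilon)\mu_\delta/I_\delta$ is equivalent to $\frac{b}{1-\varepsilon}I_\delta>\mu_\delta$. An admissible $b$ exists because $\mu_\delta\le I_\delta$ and $\varepsilon>0$ give $(1-\varepsilon)\mu_\delta/I_\delta<1$, so the interval for $b$ is nonempty.

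Set $\eta:=\tfrac12\bigl(\frac{b}{1-\varepsilon}I_\delta-\mu_\delta\bigr)>0$. By the definition of the limit there is $\gamma_0$ with $c_\gamma/f_\gamma\ge\mu_\delta+\eta$ for all $\gamma\ge\gamma_0$. Taking $\gamma_0$ large also guarantees $f_\gamma\ge1$.

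\textbf{The limit $e^{c_\gamma}f_\gamma/\gamma$.} We have
\[
\frac{e^{c_\gamma}f_\gamma}{\gamma}=\gamma^{b-1}f_\gamma\le a\,\gamma^{b-1}\log\gamma .
\]
Since $b<1$, put $s:=1-b>0$ and $u=\log\gamma$. The bound becomes $a\,u\,e^{-su}\to0$ as $u\to\infty$, so the ratio tends to $0$.

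The only step needing any care is the strict inequality $c_\gamma/f_\gamma\to\frac{b}{1-\varepsilon}I_\delta>\mu_\delta$ together with the nonemptiness of the interval for $b$. Both follow from $\mu_\delta\le I_\delta$ and $\varepsilon>0$. Everything else is routine limit manipulation.
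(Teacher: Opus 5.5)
Your proof is correct and follows the paper's argument essentially line by line: the floor sandwich for $f_\gamma/\log\gamma$, the quotient limit for $c_\gamma/f_\gamma$ with $\eta$ taken as half the gap, and the bound $f_\gamma\gamma^{b-1}\le a\,\gamma^{b-1}\log\gamma\to0$. Your side remarks on the nonemptiness of the admissible range for $b$ and on $f_\gamma\ge1$ for large $\gamma$ (needed to apply Lemma~\ref{lem:arl-block}) are small but useful points that the paper leaves implicit.
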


\noindent We are now ready to prove Theorem~\ref{thm:cadd}.
\begin{proof}[Proof of Theorem~\ref{thm:cadd}]
Take an arbitrary $Q\in\mathcal Q$ with $0<I(Q;\mathcal P)<\infty$. Our following argument is in fact pointwise in $Q$ and hence applies to all such $Q\in\mathcal Q$. Our proof will proceed as follows. Take an $\varepsilon\in(0,1)$ and $\delta\in(0,1)$; take $P_\delta$ as the near minimizer, and abbreviate $I_\delta:=I(Q;\mathcal P)+\delta$ and
$\mu_\delta:=\KL(Q\|P_\delta)\le I_\delta$. Define
\[
f_\gamma:=\Big\lfloor \frac{(1-\varepsilon)\log\gamma}{I_\delta}\Big\rfloor,
\qquad
c_\gamma:=b\log\gamma\quad\text{for any }b\text{ with }
\frac{(1-\varepsilon)\mu_\delta}{I_\delta}\ <\ b\ <\ 1.
\]
Then for each $\gamma$, we will show that there exists an index $k_\gamma$ such that
\begin{equation}\label{eq:eps-delta}
\EE_{k_\gamma,Q}\!\left[(T_\gamma-k_\gamma+1)^+\ \middle|\ T_\gamma\ge k_\gamma\right]
\ \ge\ f_\gamma\,(1-o(1)).
\end{equation}
(All conditional probabilities that will follow in this proof are well-defined because we will show that \(\PP_\infty(T_\gamma\ge k_\gamma)>0\), and then obviously \(\PP_{k_\gamma,Q}(T_\gamma\ge k_\gamma)=\PP_\infty(T_\gamma\ge k_\gamma)>0)\) by Lemma~\ref{lem:prefix}.)
Consequently, this implies that
\[
\liminf_{\gamma\to\infty}\ \frac{\mathcal C_Q(T_\gamma)}{\log\gamma}
\ \ge\ \frac{1-\varepsilon}{I_\delta}.
\]
Now, letting $\delta\downarrow0$ and then $\varepsilon\downarrow0$ yields the theorem. So now it remains to prove~\eqref{eq:eps-delta}. Firstly, from Lemma~\ref{lem:par-asymp}, there must exist an $\eta>0$ and $\gamma_0$ whereby $\frac{c_\gamma}{f_\gamma}\ge \mu_\delta+\eta$ for all $\gamma\ge\gamma_0$. Next, we will choose a block with small conditional null mass. Intuitively, we divide time into blocks of length $f_\gamma$, recognizing that under the pre-change distribution, Lemma~\ref{lem:arl-block} implies that there is a (special) block has a small chance of false-alarming inside of it given its survival to its start. This is the window we will analyze. 

To elaborate, apply Lemma~\ref{lem:arl-block} to $T_\gamma$ with $f=f_\gamma$ and let $k_\gamma=(r^\star_\gamma-1)f_\gamma+1$ be the left endpoint it returns, for which $y_{r^\star_\gamma}=\PP_\infty(T_\gamma\ge k_\gamma)>0$ and,
\begin{equation}\label{eq:null-cond}
\PP_\infty(A_{k_\gamma}\mid T_\gamma\ge k_\gamma)\ \le\ \frac{f_\gamma}{\gamma},
\qquad A_{k}:=\{k\le T_\gamma\le k+f_\gamma-1\}.
\end{equation}
Now, we will apply the conditional-change of measure to split the event of an alarm with low cumulative evidence, and an alarm with high cumulative evidence. We will show that both of these cases are rare. Namely, we apply Lemma~\ref{lem:density} with $A=A_k$ and conditioning on $\{T_\gamma\ge k\}$,
\[
\PP_{k,Q}(A_k\mid T_\gamma\ge k)
=\EE_\infty\!\left[e^{L_{k,T_\gamma}}\1_{A_k}\,\middle|\,T_\gamma\ge k\right].
\]
We can now quickly split on $\{L_{k,T_\gamma}\le c_\gamma\}$, which obviously belongs to $\mathcal F_{T_\gamma}$,
\begin{align*}
\PP_{k,Q}(A_k\mid T_\gamma\ge k)
&= \EE_\infty\!\left[e^{L_{k,T_\gamma}}\1_{A_k\cap\{L_{k,T_\gamma}\le c_\gamma\}}\ \middle|\ T_\gamma\ge k\right]\\
&\quad + \EE_\infty\!\left[e^{L_{k,T_\gamma}}\1_{A_k\cap\{L_{k,T_\gamma}>c_\gamma\}}\ \middle|\ T_\gamma\ge k\right]\\
&\le e^{c_\gamma}\PP_\infty(A_k\mid T_\gamma\ge k)
   + \PP_{k,Q}\!\left(A_k\cap\{L_{k,T_\gamma}>c_\gamma\}\ \middle|\ T_\gamma\ge k\right)\\
&=: (\mathrm I)+(\mathrm{II}),
\end{align*}
where for the second term we used Lemma~\ref{lem:density} once again. We will now quickly bound $(\mathrm I)$ via the ARL constraint. By  \eqref{eq:null-cond} and Lemma~\ref{lem:par-asymp}, $(\mathrm I)\le e^{c_\gamma}\,f_\gamma/\gamma
= f_\gamma\,\gamma^{b-1}\xrightarrow[\gamma\to\infty]{}0$, which is immediate from the fact that $b<1$ and $f_\gamma=O(\log\gamma)$ while $\log\gamma=o(\gamma^\alpha)$ for any $\alpha>0$. Proceeding by bounding $(\mathrm{II})$ via maximal SLLN, conditionally, we know that $T_\gamma\le k+f_\gamma-1$ so it follows that,
\[
A_k\cap\{L_{k,T_\gamma}>c_\gamma\}\ \subseteq\
\Big\{\max_{1\le m\le f_\gamma}L_{k:k+m-1}>c_\gamma\Big\}.
\]
Here, the right-hand event belongs to $\sigma(X_k,\dots,X_{k+f_\gamma-1})$ whence by Lemma~\ref{lem:prefix} and the definition of $(\mathrm{II})$,
\[
(\mathrm{II})
\le \PP_{k,Q}\!\left(\max_{1\le m\le f_\gamma}L_{k:k+m-1}>c_\gamma\ \middle|\ T_\gamma\ge k\right)
= \PP_{k,Q}\!\left(\max_{1\le m\le f_\gamma}L_{k:k+m-1}>c_\gamma\right).
\]
Under $\PP_{k,Q}$, we know that the the increments $(\ell_i)_{i\ge k}$ are i.i.d.\ with mean $\mu_\delta$ and
$\EE_{k,Q}[|\ell_i|]<\infty$. And, by Lemma~\ref{lem:par-asymp}, it follows that,
$\frac{c_\gamma}{f_\gamma}\ge \mu_\delta+\eta$ for all $\gamma\ge\gamma_0$; therefore, the corollary in Lemma~\ref{lem:maxSLLN} yields $(\mathrm{II})\to0$. If we now combine these last two results, this gives us
\[
\PP_{k_\gamma,Q}\!\left(T_\gamma\le k_\gamma+f_\gamma-1\ \middle|\ T_\gamma\ge k_\gamma\right)\ \xrightarrow{}\ 0,
\]
and hence
\[
\PP_{k_\gamma,Q}\!\left(T_\gamma\ge k_\gamma+f_\gamma\ \middle|\ T_\gamma\ge k_\gamma\right)\ \xrightarrow{}\ 1.
\]
By conditional Markov it follows that,
\[
\EE_{k_\gamma,Q}\!\left[(T_\gamma-k_\gamma+1)^+\ \middle|\ T_\gamma\ge k_\gamma\right]
\ \ge\ f_\gamma\ \PP_{k_\gamma,Q}\!\left(T_\gamma\ge k_\gamma+f_\gamma\ \middle|\ T_\gamma\ge k_\gamma\right)
\ =\ f_\gamma\,(1-o(1)).
\]

\noindent Taking the supremum over $k$ yields $\mathcal C_Q(T_\gamma)\ge f_\gamma(1-o(1))$. Finally, Lemma~\ref{lem:par-asymp} gives
$f_\gamma/\log\gamma=\frac{1-\varepsilon}{I_\delta}+o(1)$, proving~\eqref{eq:eps-delta},
and letting $\delta\downarrow0$ then $\varepsilon\downarrow0$ completes the proof. Since our above argument holds for an arbitrary $Q\in\mathcal Q$ that satisfies $0<I(Q;\mathcal P)<\infty$, the bound must in fact hold for all $Q\in\mathcal Q$.
\end{proof}

\noindent Note that in the following section, all of the bounds on $\mathcal C_Q(T^{\mathrm{BM}}_\gamma)$ below hold under each and every pre-change law $P\in\mathcal P_m$. Because, the post-change segment is iid $Q$ and independent of $\mathcal F_{k-1}$ under $\PP^{(P)}_{k,Q}$. Meaning, the resulting upper bound we will present below applies to any least-favorable pre-change distribution used in the above general lower bound.

\section{Asymptotically Optimal E-detector for the Bounded Mean Problem}\label{sec:bounded-mean}

In this section, let us have some fun by specializing the general lower bound in Theorem~\ref{thm:cadd} to the classical bounded mean setting. In doing so, we will show that the lower bound is tight in the ``pure ARL$\to\infty$'' regime. That is, there exists a family of stopping rules $\{T_\gamma\}_{\gamma>0}$ that satisfy the ARL constraint and achieve the matching $\log\gamma$-delay constant for every fixed post-change law $Q$. Further, throughout this section we will take $\mathcal X=[0,1]$ and consider a known baseline $m\in(0,1)$. Let us denote the pre-change class $\mathcal P_m:=\Big\{\mathcal P[0,1]: \EE_P[X_1]\le m\Big\}$. And, let us denote the post-change class as $\mathcal Q_m:=\Big\{\mathcal Q[0,1]: \EE_Q[X_1]>m\Big\}$, which clearly corresponds to an increase above the baseline mean. For such a $Q\in\mathcal Q_m$, define the information quantity as $\mathrm{KL}_{\inf}(Q;m) := I(Q;\mathcal P_m) = \inf_{P\in\mathcal P_m}\KL(Q\|P)$. 

To recap, from our Theorem~\ref{thm:cadd} any family of stopping rules $\{T_\gamma\}$ which satisfies the ARL constraint $\inf_{P\in\mathcal P_m}\EE_{P^\infty}[T_\gamma]\ge \gamma$ must indeed obey the universal lower bound on detection delay of $\log \gamma/\mathrm{KL}_{\inf}(Q;m)$ (as $\gamma \to \infty$).
 In what follows, we're going to show that this bound is achievable, therefore tight. As such, we will establish asymptotic optimality (also for the uniform notion in Corollary~\ref{cor:minimax-cadd-lb}). In addition, we will construct a detector that attains a matching uniform upper bound over separated subclasses of $\mathcal Q_m$. Let us first define something we need for building a variational representation of $\mathrm{KL}_{\inf}(Q;m)$. Meaning, for $\lambda\in(0,1)$ and $x\in[0,1]$, let us define
\begin{equation}\label{eq:bm-Llambda}
L_\lambda(x):=1+\lambda\Big(\frac{x}{m}-1\Big)=(1-\lambda)+\frac{\lambda}{m}x,
\end{equation}
which is an e-value for $\mathcal P_m$ (an e-value is a nonnegative random variable whose expectation is at most one under every $P \in \mathcal P_m$).
\noindent Let us now define our mixture Shiryaev-Roberts (SR) type detector with ARL control. It is in fact exactly an e-detector as in \citet{shin2024edetectors}. ARL control when thresholding an e-detector is immediate from their results, but we still provide a proof for it in our theorem below for completeness. Now, consider a countable dense set\footnote{For example, the dyadic rationals.} $\Lambda\subset(0,1)$, and pick weights $(w_\lambda)_{\lambda\in\Lambda}$ with $w_\lambda>0$ and $\sum_{\lambda\in\Lambda}w_\lambda=1$. Then for each $\lambda\in\Lambda$, define the SR-statistic as,
\[
R^{(\lambda)}_n:=\sum_{k=1}^n \prod_{i=k}^n L_\lambda(X_i), \quad \text{where } R^{(\lambda)}_0:=0,
\]
and $L_\lambda$ is the same as what is defined in \eqref{eq:bm-Llambda}. Obviously, this statistic has the following the one-step recursion which we can see from factoring out $L_\lambda(X_n)$,
\begin{equation}\label{eq:bm-SR-rec}
R^{(\lambda)}_n =\big(1+R^{(\lambda)}_{n-1}\big)L_\lambda(X_n).  
\end{equation}

\noindent Now, let us form the mixture statistic $M_n:=\sum_{\lambda\in\Lambda}w_\lambda R^{(\lambda)}_n$. And then, let us define the following stopping rule, which will be our bounded-mean detector, as,
\begin{equation}\label{eq:bm-Tgamma}
T^{\mathrm{BM}}_\gamma := \inf\{n\ge1: M_n\ge \gamma\}.    
\end{equation}

\noindent Given all this, we now present our achievability theorem below.
\begin{theorem}\label{thm:bm-opt}
Take a particular $m\in(0,1)$ and define $\mathcal P_m$ and let $T^{\mathrm{BM}}_\gamma$ be as in \eqref{eq:bm-Tgamma}. Then:
\begin{enumerate}
    \item Firstly, we have ARL-calibration: $\inf_{P\in\mathcal P_m}\EE_{P^\infty}[T^\mathrm{BM}_\gamma]\ge \gamma$ for all $\gamma>0$.
    \item Secondly, we in fact have asymptotic optimality in this regime. Meaning, for every $Q\in\mathcal Q_m$,
    \[
    \boxed{
    \quad \lim_{\gamma\to\infty}\frac{\mathcal C_Q(T^{\mathrm{BM}}_\gamma)}{\log\gamma} \ =\ \frac{1}{\mathrm{KL}_{\inf}(Q;m)}.\quad
    }
    \]
    \item Third, we have uniform minimax optimality on separated classes: for any $\Delta\in(0,1-m)$, 
    \[
    \boxed{
    \quad \lim_{\gamma\to\infty}\ \sup_{Q\in\mathcal Q_{m+\Delta}}\ \frac{\mathrm{KL}_{\inf}(Q;m)\,\mathcal C_Q(T^{\mathrm{BM}}_\gamma)}{\log\gamma}\ =\ 1.\quad
    }
    \]
\end{enumerate}
\end{theorem}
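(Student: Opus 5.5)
The plan has three parts, matching the three claims: ARL control via a martingale argument, a delay upper bound via the variational form of $\KLinf$ plus a strong-law argument, and uniformity via bounded, equicontinuous log-e-values on the separated class. Throughout, the lower bounds come from Theorem~\ref{thm:cadd} and Corollary~\ref{cor:minimax-cadd-lb}, so only upper bounds need proof.

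\textbf{ARL-calibration.} Under any $P\in\mathcal P_m$, each $L_\lambda(X_i)$ is nonnegative with $\EE_P[L_\lambda(X_i)]\le 1$. The recursion \eqref{eq:bm-SR-rec} then gives $\EE_P[R^{(\lambda)}_n-n\mid\mathcal F_{n-1}]\le R^{(\lambda)}_{n-1}-(n-1)$. Hence $M_n-n$ is a supermartingale with $M_0=0$. Fix $P$ and assume $\EE_P[T]<\infty$, since otherwise there is nothing to prove. Optional stopping applied at $T\wedge N$, followed by monotone convergence, gives $\EE_P[M_T]\le \EE_P[T]$. Since $M_T\ge\gamma$ on $\{T<\infty\}$, this yields $\EE_P[T]\ge\gamma$.

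\textbf{Pointwise upper bound.} The lower bound is Theorem~\ref{thm:cadd}, so it remains to show $\limsup_\gamma \CADD_Q(T_\gamma)/\log\gamma\le 1/\KLinf(Q;m)$. The first ingredient is the variational representation $\KLinf(Q;m)=\sup_{\lambda\in[0,1]}\EE_Q[\log L_\lambda(X)]$, the standard Honda--Takemura duality for bounded means. Because $\lambda\mapsto\EE_Q\log L_\lambda$ is concave and continuous on $[0,1)$, density of $\Lambda$ gives, for each $\epsilon>0$, some $\lambda\in\Lambda\cap(0,1)$ with $g_\lambda:=\EE_Q\log L_\lambda(X)\ge \KLinf-\epsilon$. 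This $\log L_\lambda$ is bounded, since $L_\lambda\ge 1-\lambda>0$.

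The second ingredient is a pathwise reduction that removes both the conditioning and the dependence on the prefix. Since $R^{(\lambda)}_n\ge\prod_{i=k}^n L_\lambda(X_i)$, we have $M_n\ge w_\lambda\exp(S_{k:n})$, where $S_{k:n}:=\sum_{i=k}^n\log L_\lambda(X_i)$. Therefore on $\{T\ge k\}$,
\[
(T-k+1)\le \tau_k:=\inf\{j\ge1: S_{k:k+j-1}\ge\log(\gamma/w_\lambda)\}.
\]
The variable $\tau_k$ depends only on the post-change segment, so Lemma~\ref{lem:prefix} gives $\EE_{k,Q}[(T-k+1)^+\mid T\ge k]\le\EE_Q[\tau]$, uniformly in $k$ and in the pre-change law. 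The increments of $S$ are i.i.d.\ with positive mean $g_\lambda$ and are bounded. Wald's identity with bounded overshoot therefore gives
\[
\EE_Q[\tau]\le \frac{\log(\gamma/w_\lambda)+C_\lambda}{g_\lambda}.
\]
Dividing by $\log\gamma$, letting $\gamma\to\infty$ and then $\epsilon\to0$ gives the claim.

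\textbf{Uniform part.} This is the main obstacle, because $\lambda$ must be chosen uniformly over $Q\in\mathcal Q_{m+\Delta}$ and the overshoot constant must be controlled uniformly. The plan is to show that the optimizing $\lambda^*(Q)$ stays bounded away from $1$ on the separated class, or else to cap $\lambda\le 1-\eta$ at a loss that vanishes as $\eta\to0$ uniformly in $Q$. On $[0,1-\eta]$, $\log L_\lambda(x)$ is uniformly bounded by $|\log\eta|+\log(1/m)$ and uniformly Lipschitz in $\lambda$. A finite subset of $\Lambda$, forming an $\epsilon$-net of $[0,1-\eta]$, then achieves $g_\lambda\ge \KLinf(Q;m)-2\epsilon$ for all $Q$ simultaneously. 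For this finite net, $\min w_\lambda>0$.

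The Wald overshoot is bounded by $\sup|\log L_\lambda|$, which is uniform. Separation gives $\KLinf(Q;m)\ge 2\Delta^2>0$ by Pinsker, so the relative error $2\epsilon/\KLinf$ is uniformly small. To justify the cap, I would check that $\sup_{\lambda\le1-\eta}\EE_Q\log L_\lambda\ge \KLinf-o_\eta(1)$ uniformly. This follows from concavity together with the uniform bound $\EE_Q\log L_\lambda\ge\log(1-\lambda)$ near $\lambda=1$. If needed, one can restrict attention to the $\lambda$ with $\EE_Q\log L_\lambda\to$ its supremum, using monotone convergence; the relevant family is equicontinuous once one notes $\log L_1(x)=\log(x/m)$. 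This is the step I would work out most carefully, and Corollary~\ref{cor:minimax-cadd-lb} supplies the matching lower bound.
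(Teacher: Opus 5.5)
Your parts 1 and 2 are essentially the paper's argument. Part 1 is the supermartingale $M_n-n$ with optional stopping at $T\wedge N$. Part 2 dominates $T^{\mathrm{BM}}_\gamma$ by a single weighted component, uses $R^{(\lambda)}_n\ge\prod_{i=k}^n L_\lambda(X_i)$ and the independence of $\{T\ge k\}$ from the post-change segment, and finishes with Wald's identity and overshoot at most $\log(1/m)$, as in Lemma~\ref{lem:bm-hit}.

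One small fix in part 1. Going from $\EE_P[M_{T\wedge N}]\le\EE_P[T\wedge N]$ to $\EE_P[M_T]\le\EE_P[T]$ needs Fatou's lemma on the left, as in the paper. $M_{T\wedge N}$ is not monotone in $N$, so monotone convergence only handles the right-hand side.

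For part 3 you take a different route from the paper. The paper never caps $\lambda$. It combines $g_Q(\lambda)\le\lambda(1/m-1)$ with $\mathrm{KL}_{\inf}(Q;m)\ge 2\Delta^2$ to show that any $(1-\varepsilon/2)$-optimal $\lambda_Q$ is at least some $\underline\lambda_{\Delta,\varepsilon}>0$. It then places a finite grid in $\Lambda$, with mesh proportional to $\underline\lambda_{\Delta,\varepsilon}$ and points just below each node. Finally it uses $g_Q(t\lambda)\ge t\,g_Q(\lambda)$ for $t\in[0,1]$ (concavity plus $g_Q(0)=0$) to get a multiplicative $(1-\varepsilon)$-approximation. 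No Lipschitz estimate is needed, and the finite grid keeps every chosen $\lambda$ strictly below $1$.

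You instead restrict to $[0,1-\eta]$, where $\log L_\lambda$ is uniformly bounded and $O(1/\eta)$-Lipschitz in $\lambda$, and take an additive net. Only at the end do you use the Pinsker gap to turn the additive loss into a relative one. This works and separates the issues cleanly (cap, then net, then relative error). But the cap itself rests on the paper's scaling inequality: for every $\lambda<1$, $\sup_{\lambda'\le 1-\eta}g_Q(\lambda')\ge g_Q((1-\eta)\lambda)\ge(1-\eta)g_Q(\lambda)$. Hence capping costs at most $\eta\,\mathrm{KL}_{\inf}(Q;m)\le\eta\log(1/m)$, uniformly in $Q$.

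You should justify the cap this way, because the alternatives you mention do not hold:
\begin{itemize}
\item The optimizer is not bounded away from $1$ on $\mathcal Q_{m+\Delta}$. For $Q=\delta_1$, $g_Q(\lambda)=\log(1-\lambda+\lambda/m)$ is increasing and maximized at $\lambda=1$.
\item The bound $g_Q(\lambda)\ge\log(1-\lambda)$ controls how far $g_Q$ can fall near $1$, not how much it can gain there, so it cannot justify the cap.
\item The family $\{g_Q\}$ is not equicontinuous near $\lambda=1$. For $Q_\theta=(1-\theta)\delta_1+\theta\delta_0\in\mathcal Q_{m+\Delta}$ with small $\theta>0$, $g_{Q_\theta}(\lambda)\to-\infty$ as $\lambda\uparrow 1$, while $g_{\delta_1}$ stays bounded by $\log(1/m)$.
\end{itemize}
With the scaling inequality in place of these, your uniform argument is complete.
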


\noindent We describe the key items we need for this theorem as follows. An immediate one is that we need to verify both positivity and finiteness of $\mathrm{KL}_{\inf}(Q;m)$. In addition, to get uniformity, we need a uniform information gap under mean separation. Finally, we use a well-known fact from the works of \citet{HondaTakemura2010} which tell us that the entire minimization over all distributions $P$ with the mean constraint becomes a simple one-dimensional maximization over $\lambda$. We formalize all that with the following lemma. 

\begin{lemma}\label{lem:bm-pos-fin}
Suppose that $Q$ is supported on $[0,1]$ and $\EE_Q[X_1]=q>m$. Then we have that $0<\mathrm{KL}_{\inf}(Q;m)<\infty$.  In addition, $\mathrm{KL}_{\inf}(Q;m) = \sup_{\lambda\in[0,1]}\EE_Q[\log L_\lambda(X_1)]$. Finally, for any arbitrary $\Delta\in(0,1-m)$, we have that $\inf_{Q\in\mathcal Q_{m+\Delta}} \mathrm{KL}_{\inf}(Q;m)\ge 2\Delta^2$.
\end{lemma}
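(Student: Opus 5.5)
The proof has three parts: finiteness, the variational formula (which gives positivity), and the uniform gap via Pinsker.

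\emph{Finiteness.} Take the point mass $P=\delta_0$? No: $Q\not\ll\delta_0$ in general. Instead take $P=(1-\alpha)\delta_0+\alpha\,\mathrm{Unif}[0,1]$? This also fails when $Q$ has atoms in $(0,1]$. The clean choice is $P=(1-\theta)\delta_0+\theta Q$ with $\theta=m/q\in(0,1)$. Then $\EE_P[X_1]=\theta q=m$, so $P\in\mathcal P_m$. Moreover $dQ/dP\le 1/\theta$ on the support of $Q$, hence $\KL(Q\|P)\le\log(q/m)<\infty$.

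\emph{Variational formula.} I would cite \citet{HondaTakemura2010}, which gives
\[
\inf_{P\in\mathcal P[0,1]:\ \EE_P X\le m}\KL(Q\|P)=\max_{\lambda\in[0,1]}\EE_Q\log\Big(1-\lambda\frac{X-m}{1-m}\Big)
\]
for the upper-mean-constraint version. I also need to match their parametrization to $L_\lambda$: our problem is the reflected one, with $Q$'s mean above $m$ and the infimum over $P$ with mean $\le m$. The paper's $L_\lambda(x)=1+\lambda(x/m-1)$ corresponds to the dual variable $\lambda/m$ on $[0,1/m]$, and nonnegativity on $[0,1]$ holds exactly for $\lambda\in[0,1]$.

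To be self-contained I would prove both directions.
\begin{itemize}
\item[] \emph{Weak duality ($\ge$).} For any $P\in\mathcal P_m$ with $Q\ll P$ and any $\lambda\in[0,1]$, Gibbs' variational inequality gives $\KL(Q\|P)\ge\EE_Q[g]-\log\EE_P[e^{g}]$ with $g=\log L_\lambda$. Since $\EE_P[L_\lambda]\le1$ (the e-value property, because $\EE_PX\le m$), the second term is $\ge0$, so $\KL(Q\|P)\ge\EE_Q\log L_\lambda$. This needs care when $L_1(0)=0$, handled by monotone limits.
\item[] \emph{Strong duality ($\le$).} Let $\lambda^\star$ maximize the concave function $\phi(\lambda)=\EE_Q\log L_\lambda(X)$ on $[0,1]$. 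Define $dP^\star/dQ=1/L_{\lambda^\star}$ on the support of $Q$, and put any leftover mass $1-\EE_Q[1/L_{\lambda^\star}]$ at $0$. The first-order condition $\phi'(\lambda^\star)\ge0$ (with equality if $\lambda^\star<1$) shows $\EE_Q[1/L_{\lambda^\star}]\le1$ and $\EE_{P^\star}X\le m$. Then $\KL(Q\|P^\star)=\phi(\lambda^\star)$.
\end{itemize}
Positivity then follows because $\phi(0)=0$ and $\phi'(0)=\EE_Q[X/m-1]=(q-m)/m>0$, so the supremum is strictly positive. Verifying the first-order/leftover-mass bookkeeping in the boundary case $\lambda^\star=1$ is the main obstacle. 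I would try to sidestep it entirely by citing Honda--Takemura directly.

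\emph{Uniform gap.} For $Q\in\mathcal Q_{m+\Delta}$ and any $P\in\mathcal P_m$, Pinsker's inequality gives $\KL(Q\|P)\ge 2\,\mathrm{TV}(Q,P)^2$. Since $x\in[0,1]$, we have $\mathrm{TV}(Q,P)\ge|\EE_QX-\EE_PX|\ge\Delta$. Taking the infimum over $P$ and then over $Q$ gives $\inf_{Q\in\mathcal Q_{m+\Delta}}\KLinf(Q;m)\ge 2\Delta^2$.
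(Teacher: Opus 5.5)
Your proof is correct, and for finiteness (your $\theta=m/q$ is the paper's $1-\varepsilon$), the weak-duality direction (Gibbs where the paper uses the log-sum inequality), and the $2\Delta^2$ gap via Pinsker it coincides with the paper. The genuine difference is the $\le$ direction of the variational formula.

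The paper passes to densities $p=dP^{\mathrm{ac}}/dQ$ and casts $\mathrm{KL}_{\inf}(Q;m)$ as a convex program with two linear constraints. It then invokes Slater's condition for a zero duality gap, computes the Lagrange dual $\sup_{\alpha,\beta}\{1+\EE_Q\log(\alpha+\beta X_1)-\alpha-\beta m\}$, and normalizes $\alpha+\beta m=1$ to arrive at $L_\lambda$. That derives the dual form without guessing anything. However, it leans on a general strong-duality theorem in a function space and never exhibits a minimizer.

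Your primal--dual certificate needs only one-variable concavity, and it shows the infimum is attained at an explicit $P^\star$. The boundary case you flagged also closes without citing anything:
\begin{itemize}
\item[] \emph{Existence of $\lambda^\star$.} $\phi$ is upper semicontinuous on $[0,1]$ by reverse Fatou, since $\log L_\lambda\le\log(1/m)$.
\item[] \emph{Interior $\lambda^\star$.} The identity $\phi'(\lambda)=\lambda^{-1}(1-\EE_Q[1/L_\lambda(X_1)])$ gives $\EE_Q[1/L_{\lambda^\star}]=1$. Then $\EE_Q[X_1/L_{\lambda^\star}(X_1)]=m$, because $x/L_\lambda(x)=m/L_\lambda(x)+(m/\lambda)(1-1/L_\lambda(x))$.
\item[] \emph{$\lambda^\star=1$.} Finiteness of $\phi(1)$ forces $Q(\{0\})=0$. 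The left derivative $\phi'_-(1)=1-\EE_Q[m/X_1]$ (monotone convergence on $\{X_1<m\}$) must be nonnegative, so $\EE_Q[m/X_1]\le 1$. Then $P^\star(dx)=(m/x)\,Q(dx)+(1-\EE_Q[m/X_1])\,\delta_0(dx)$ has mean exactly $m$ and a $Q$-null atom, so $\KL(Q\|P^\star)=\phi(1)$.
\end{itemize}

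For positivity, you use $\phi'(0^+)=(q-m)/m>0$ together with weak duality. The paper instead argues by contradiction via Pinsker; this is really the gap argument with $\Delta=q-m$, and it yields the quantitative bound $2(q-m)^2$.

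One slip to fix: as displayed, your Honda--Takemura formula has the constraint direction reversed. The quantity $1-\lambda\frac{X-m}{1-m}$ is an e-value for $\{\EE_P X\ge m\}$, so read literally the formula returns $0$ for $Q$ with mean above $m$. It becomes $L_\lambda$ only after the reflection $x\mapsto 1-x$, $m\mapsto 1-m$, which you do mention. Since your self-contained argument does not rely on the citation, nothing breaks.
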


\noindent With all of this that we have now developed, for the actual asymptotic optimality results in our theorem, we will only need the following consequence of the second statement in the above Lemma. The fact that for every $\eta>0$ there exists some $\lambda\in(0,1)$ with $\EE_Q[\log L_\lambda(X_1)]\ge \mathrm{KL}_{\inf}(Q;m)-\eta$. In other words, we do not need any closed form optimizer $\lambda^\star$. Now, before we proceed to our main theorem's proof, we will need one more hitting-time fact for positive-drift random walks: in other words, for the bounded increments we will derive some hitting time asymptotics.

\begin{lemma}\label{lem:bm-hit}
Let $(Y_i)_{i\ge1}$ be iid with $\EE[Y_1]=\mu>0$ and assume that $Y_1\in[a,b]$ almost surely for some finite $a<b$. Define $S_n:=\sum_{i=1}^n Y_i$ and for $u>0$, $\tau_u:=\inf\{n\ge 1: S_n\ge u\}$. Then, $\tau_u<\infty$ almost surely and $\EE[\tau_u]<\infty$ for every $u>0$. Moreover for every $u>0$,
\begin{equation}\label{eq:bm-hit-bounds}
\frac{u}{\mu}\le \EE[\tau_u] \le \frac{u+b}{\mu}.   
\end{equation}
In addition,
\[
\lim_{u\to\infty}\frac{\EE[\tau_u]}{u} = \frac{1}{\mu}.
\]
\end{lemma}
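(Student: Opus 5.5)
The plan is to use Wald's identity together with a renewal-type overshoot bound. First, for finiteness: since $Y_i\ge a$ and the mean is $\mu>0$, the SLLN gives $S_n/n\to\mu$ a.s. Hence $S_n\to\infty$ and $\tau_u<\infty$ a.s. For integrability, I would truncate. Set $\tau_u\wedge N$, a bounded stopping time, and apply Wald's identity (optional stopping for the martingale $S_n-n\mu$):
\[
\mu\,\EE[\tau_u\wedge N]=\EE[S_{\tau_u\wedge N}].
\]
On $\{\tau_u>N\}$ we have $S_N<u$. On $\{\tau_u\le N\}$ we have $S_{\tau_u}=S_{\tau_u-1}+Y_{\tau_u}<u+b$, because $S_{\tau_u-1}<u$ (with $S_0=0<u$) and $Y_{\tau_u}\le b$. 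So $S_{\tau_u\wedge N}<u+b$ in both cases (using $b\ge \mu>0$), which gives $\EE[\tau_u\wedge N]\le (u+b)/\mu$. Monotone convergence as $N\to\infty$ then yields $\EE[\tau_u]\le (u+b)/\mu<\infty$, which is both integrability and the upper bound in \eqref{eq:bm-hit-bounds}.

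For the lower bound, $\EE[\tau_u]<\infty$ and $\EE|Y_1|<\infty$, so Wald's identity applies directly: $\EE[S_{\tau_u}]=\mu\EE[\tau_u]$. Since $S_{\tau_u}\ge u$ by definition of $\tau_u$ (finite a.s.), we get $\mu\EE[\tau_u]\ge u$. Dividing both bounds by $u$ gives
\[
\frac1\mu\le \frac{\EE[\tau_u]}{u}\le \frac1\mu+\frac{b}{u\mu},
\]
and the limit statement follows as $u\to\infty$.

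The only delicate point is justifying Wald's identity before integrability of $\tau_u$ is known. The truncation at $N$ with bounded increments handles this. I also need the overshoot bound to treat the case $\tau_u=1$ correctly, which the convention $S_0=0$ covers. Everything else is routine.
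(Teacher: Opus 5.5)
Your proof is correct and follows the paper's argument essentially step for step. It uses the SLLN for finiteness, optional stopping of $S_n-\mu n$ at $\tau_u\wedge N$ with the overshoot bound $S_{\tau_u\wedge N}<u+b$ plus monotone convergence for integrability and the upper bound, and then $\EE[S_{\tau_u}]=\mu\,\EE[\tau_u]$ with $S_{\tau_u}\ge u$ for the lower bound. The only difference is that you cite Wald's identity for the integrable stopping time directly, while the paper re-derives it by a dominated-convergence passage $N\to\infty$ using the bounded martingale increments; you also state explicitly the fact $b\ge\mu>0$, which the paper uses tacitly.
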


\noindent We now present the proof for the asymptotic optimality statement in our Theorem~\ref{thm:bm-opt}. We defer both the ARL calibration (the first statement of the theorem) and the uniform minimax proof (the third statement of the theorem) to the appendix.

\smallskip
\begin{proof}[Proof of the Asymptotic Optimality Claim in Theorem~\ref{thm:bm-opt}.]
We will now prove the second statement. We will start this proof by first reducing the detector if you will, to a single ``good'' $\lambda$. As such, to begin let us consider some arbitrary $\eta\in(0,\mathrm{KL}_{\inf}(Q;m))$. We will show that 
\[
\limsup_{\gamma\to\infty}\frac{\mathcal C_Q(T^{\mathrm{BM}}_\gamma)}{\log\gamma} \le \frac{1}{\mathrm{KL}_{\inf}(Q;m)-\eta},
\]
which yields the theorem by letting $\eta \downarrow 0$.

For some $\lambda\in[0,1)$, let's define $g(\lambda):=\EE_Q[\log L_\lambda(X_1)]$ so that $\sup_{\lambda\in[0,1]}g(\lambda)=\mathrm{KL}_{\inf}(Q;m)$ by Lemma~\ref{lem:bm-pos-fin}. Furthermore because of the fact that $\lambda\mapsto \log L_\lambda(x)$ is concave in $\lambda$ for each $x$, it follows that the function $g(\lambda)=\EE_Q[\log L_\lambda(X_1)]$ is concave on $(0,1)$. Therefore, it must admit a left limit at $1$, which could possibly be $-\infty$, so $\sup_{\lambda\in[0,1]}g(\lambda)=\sup_{\lambda\in(0,1)}g(\lambda)$. Now, we know that $L_\lambda$ is continuous in $\lambda$ for each $x$ and is bounded away from $0$ uniformly over $\lambda\in[0,1-\delta]$ for any $\delta>0$. As a result, the map $\lambda\mapsto g(\lambda)$ indeed is continuous on $[0,1)$, and could possibly take value $-\infty$ at $\lambda=1$ should $Q(\{0\})>0 $. Furthermore, we know that because $\Lambda$ is dense in $(0,1)$, we can easily choose $\lambda_\eta\in\Lambda$ such that,
\[
d_\eta := g(\lambda_\eta) = \EE_Q[\log L_{\lambda_\eta}(X_1)] \ge \mathrm{KL}_{\inf}(Q;m)-\eta > 0.
\]

\noindent Let $w_\eta:=w_{\lambda_\eta}>0$ be its mixture weight. Note that necessarily $M_n\ge w_\eta R^{(\lambda_\eta)}_n$ for all $n$. As a consequence we of course get that,
\[
\{R^{(\lambda_\eta)}_n \ge \gamma/w_\eta\} \subseteq \{M_n\ge\gamma\}.
\]

\noindent What this entails is that the stopping time $T^{\mathrm{BM}}_\gamma$ is always no larger than the stopping time that we would get from using only the single component $\lambda_\eta$ with the threshold $\gamma/w_\eta$. To formalize that intuition, let us define $T^{(\lambda_\eta)}_{\gamma/w_\eta}:=\inf\{n\ge 1: R^{(\lambda_\eta)}_n\ge \gamma/w_\eta\}$. Then as a consequence we have that with probability one,
\begin{equation}\label{eq:bm-T-dom}
 T^{\mathrm{BM}}_\gamma \le T^{(\lambda_\eta)}_{\gamma/w_\eta}.   
\end{equation}

\noindent So, we have formalized the idea that the mixture can only stop earlier than any single component that has been scaled by its weight. With this being said, we will now argue that after the changepoint, one SR term is enough for us, ie that $R^{(\lambda_\eta)}_n$ contains the correct product. As such, consider a particular changepoint $k\ge 1$. For any pre-change law $P\in\mathcal P_m$, under $\PP^{(P)}_{k,Q}$ we know that by definition the post-change segment $(X_k, X_{k+1},\dots)$ is iid $Q$ and is independent of $\mathcal F_{k-1}$. That is, the event $\{T^{\mathrm{BM}}_\gamma\ge k\}\in\mathcal F_{k-1}$ is indeed independent of the post-change observations as it is contained in the generated sigma algebra of the pre-change observations. Now, note that for $n\ge k$, $R^{(\lambda_\eta)}_n$ contains the $j=k$ summand; that is,
\[
R^{(\lambda_\eta)}_n = \sum_{j=1}^n \prod_{i=j}^n L_{\lambda_\eta}(X_i) \ge \prod_{i=k}^n L_{\lambda_\eta}(X_i).
\]

\noindent Intuitively, what we are illustrating above is that if indeed the product from $k$ onwards hits $\gamma/w_\eta$, $R^{(\lambda_\eta)}_n$ must also. Now, taking logs, let us let $S_t:=\sum_{i=1}^t Y_i$ and $Y_i:=\log L_{\lambda_\eta}(X_{k+i-1})$ for $t\ge 1$. Then it follows that for $t\ge 1$,
\[
\prod_{i=k}^{k+t-1}L_{\lambda_\eta}(X_i)=\exp(S_t).
\]

\noindent Now define the post-change crossing time as $\tau^{(\eta)}_{u}:=\inf\{t\ge 1: S_t\ge u\}$. With $u_\gamma:=\log(\gamma/w_\eta)$, we can conclude that $S_{\tau^{(\eta)}_{u_\gamma}} \ge u_\gamma$. Therefore,
\[
\prod_{i=k}^{k+\tau^{(\eta)}_{u\gamma}-1}L_{\lambda_\eta}(X_i) \ge \gamma/w_\eta \quad \Longrightarrow R^{(\lambda_\eta)}_{k+\tau^{(\eta)}_{u_\gamma}-1} \ge \gamma/w_\eta.
\]

\noindent Hence, $T^{(\lambda_\eta)}_{\gamma/w_\eta}\le k+\tau^{(\eta)}_{u_\gamma}-1$. So it must be the case that $(T^{(\lambda_\eta)}_{\gamma/w_\eta}-k+1)^+ \le \tau^{(\eta)}_{u_\gamma}$. Now if we combine this result with \eqref{eq:bm-T-dom}, we get the pathwise inequality which tells us that the detection delay is bounded by this hitting time. Meaning, under $\PP^{(P)}_{k,Q}$ for every $k\ge1$,
\begin{equation}\label{eq:bm-delay-dom}
(T^{(\lambda_\eta)}_{\gamma/w_\eta}-k+1)^+ \le \tau^{(\eta)}_{u_\gamma}. 
\end{equation}

\noindent Let us now argue why we can drop the conditioning on surivial, ie conditioning on $\{T^{\mathrm{BM}}_\gamma\ge k\}$. We know that $\{T^{\mathrm{BM}}_\gamma \ge k\}\in\mathcal F_{k-1}$ and $\tau^{(\eta)}_{u_\gamma}$ depends only on $(X_k,X_{k+1}, \dots)$, and hence under $\PP^{(P)}_{k,Q}$ the two are independent. So conditioning on $\{T^{\mathrm{BM}}_\gamma \ge k\}$ does not change the distribution of $\tau^{(\eta)}_{u_\gamma}$. Therefore,
\[
\EE^{(P)}_{k,Q}[\tau^{(\eta)}_{u_\gamma}\, | \, T^{\mathrm{BM}}_\gamma \ge k] = \EE_Q[\tau^{(\eta)}_{u_\gamma}].
\]

\noindent In addition, from \eqref{eq:bm-delay-dom} and using monotonicity of conditional expectation, it is easy to see that,
\[
\EE^{(P)}_{k,Q}[(T^{\mathrm{BM}}_\gamma - k+1)^+\,| \, T^{\mathrm{BM}}_\gamma\ge k] \le \EE_Q[\tau^{(\eta)}_{u_\gamma}].
\]

\noindent Now all we need to do is take the supremum over $k\ge 1$ and we obtain that,
\begin{equation}\label{eq:bm-CADD-upper}
 \mathcal C_Q(T^{\mathrm{BM}}_\gamma) \le \EE_Q[\tau^{(\eta)}_{u_\gamma}].   
\end{equation}

\noindent We will conclude the proof by deriving asymptotics on the crossing time. Under the post-change law $Q$, we know that the increments of $Y_i=\log L_{\lambda_\eta}(X_i)$ are iid and bounded. Because, $X_i\in[0,1]$ implies that $1-\lambda_\eta \le L_{\lambda_\eta}(X_i) \le (1-\lambda_\eta)+\frac{\lambda_\eta}{m}$, so $\log L_{\lambda_\eta}(X_i)$ is bounded indeed. In addition, we know that $\EE_Q[Y_1]=d_\eta>0$ by construction. Therefore we can apply Lemma~\ref{lem:bm-hit} with $\mu=d_\eta$ to give us as $\gamma\to\infty$ that,
\[
\frac{\EE_Q[\tau^{(\eta)}_{u_\gamma}]}{u_\gamma} \longrightarrow \frac{1}{d_\eta}.
\]

\noindent We know that $u_\gamma=\log(\gamma/w_\eta)=\log\gamma + O(1)$, and so because of this, we get that $u_\gamma/\log\gamma\to1$. And thus,
\[
\limsup_{\gamma\to\infty}\frac{\EE_Q[\tau^{(\eta)}_{u_\gamma}]}{\log\gamma} = \frac{1}{d_\eta}.
\]

\noindent Lastly, if we leverage \eqref{eq:bm-CADD-upper} alongside this, we get that,
\[
\limsup_{\gamma\to\infty}\frac{\mathcal C_Q(T^{\mathrm{BM}}_\gamma)}{\log\gamma} \le \frac{1}{d_\eta} \le \frac{1}{\mathrm{KL}_{\inf}(Q;m)-\eta}.
\]

\noindent Now all we need to do is let $\eta\downarrow 0$ and we get a sharp upper bound. Further, the matching lower bound is an immediate specialization of Theorem~\ref{thm:cadd} with $\mathcal P=\mathcal P_m$. So if we combine the lower and upper bounds we get the tight limit, which is exactly the second statement of our theorem. 
\end{proof}

\noindent One very obvious thing we will point out is how to extend the results of this section to general bounded intervals. Meaning, if $X_i\in[a,b]$ almost surely, of course we can reduce to the $[0,1]$ case via the transformation $\widetilde X_i:= (X_i-a)/(b-a)\in[0,1]$ and correspondingly $\widetilde m:=(m-a)/(b-a)$. To that end, the resulting procedure and constant $\mathrm{KL}_{\inf}(Q;m)$ are going to transform accordingly.
\section{Conclusion}
In composite changepoint detection, we are forced to deal with the fact that the pre-change distribution that is hardest to distinguish from a particular $Q$ is not known in advance. As such, the right information quantity is indeed the $\KLinf$ projection $I(Q;\mathcal P)$. In this paper, we prove that this particular quantity drives the first order CADD asymptotics under an ARL constraint via our universal lower bound \eqref{eq:mainlb}. We then showed tightness in the bounded-mean model by constructing a mixture SR stopping rule based on one step betting factors $L_\lambda$ (whose log growth matches $\KLinf(Q;m)$). Of course, there are several avenues of future directions of work. Firstly, an immediate one is extending beyond bounded mean detection using this same framework. Second, it would be interesting to interpolate between our i.i.d.\ sharp constant analysis to the nonasymptotic non-i.i.d.\ e-detector framework \citep{shin2024edetectors, RufLarssonKoolenRamdas2023} in some way. Doing this may give even better stopping rules that are not only distribution free under such large composite nulls but computationally efficient and constant optimal in $\log\gamma$ asymptotics (or other such regimes) when additional structure (eg, independence) is indeed there.

\bibliography{references}
\appendix
\section{Omitted Proofs for Section~\ref{sec:universallb}}\label{sec:complete-proofs}

\begin{proof}[Proof of Lemma~\ref{lem:prefix}]
Let $\mathscr{C}$ be the collection of cylinder sets in $\mathcal F_{k-1}$ of the form $C=\{(x_1,\dots,x_{k-1})\in A\}$ with $A\in\mathcal A^{\otimes(k-1)}$. Under $\PP_{k,Q}$ and $\PP_\infty$, the finite-dimensional distributions of $(X_1,\dots,X_{k-1})$ coincide and equal $P_\delta^{k-1}$, so $\PP_{k,Q}(C)=\PP_\infty(C)$ for all $C\in\mathscr{C}$. Since $\mathscr{C}$ is clearly a $\pi$-system generating $\mathcal F_{k-1}$ and both measures agree on it, by the Dynkin $\pi$-$\lambda$ theorem they must agree on all of $\mathcal F_{k-1}$.

It remains to prove the second claim of our lemma. Under $\PP_{k,Q}$ the vector $(X_1,\dots,X_{k-1})$ has law $P_\delta^{k-1}$ and
is independent of $(X_k,X_{k+1},\dots)$, which is i.i.d.\ $Q$. Since $T$ is a stopping time,
$\{T\ge k\}\in\mathcal F_{k-1}$ and is thus a function of $(X_1,\dots,X_{k-1})$ only.
Therefore $\{T\ge k\}$ is independent of $\sigma(X_k,\dots)$, proving the claim.
\end{proof}

\begin{proof}[Proof of Lemma~\ref{lem:density}]
We start with the \underline{first claim} of the lemma. For $n<k$ we have $(X_1,\dots,X_n)\sim P_\delta^n$ under both measures, hence the density equals $1$. For $n\ge k$, we use independence and the chain rule for Radon-Nikodym derivatives. For $A\in\mathcal F_n$,
\[
\PP_{k,Q}(A)
=\int \left(\prod_{i=1}^{k-1} dP_\delta(x_i)\right)\left(\prod_{i=k}^{n} dQ(x_i)\right)\1_A
=\int \left(\prod_{i=1}^{n} dP_\delta(x_i)\right)\,
\prod_{i=k}^n\frac{dQ}{dP_\delta}(x_i)\,\1_A.
\]
Thus the density with respect to $P_\delta^n$ (namely, $\PP_\infty|_{\mathcal F_n}$) equals
$\prod_{i=k}^n \frac{dQ}{dP_\delta}(X_i)=\exp(L_{k:n})$. Since $Q\ll P_\delta$, absolute continuity must follow therefore. \underline{We now prove the second claim of the lemma.} Because $T$ is a stopping time, $\{T=n\}\in\mathcal F_n$ and $A\cap\{T=n\}\in\mathcal F_n$ for all $n$. Also, on the event $\{T=n\}$ we have that $L_{k,T}=L_{k:n}$, and also that $e^{L_{k,T}}\1_{A\cap\{T=n\}}$ is non-negative and $\mathcal F_n$-measurable. By decomposition then,
\[
1_A=\sum_{n\ge1}\1_{A\cap\{T=n\}}\quad\text{(disjoint union).}
\]

\noindent It follows then by the first claim of Lemma~\ref{lem:density} and the monotone convergence theorem that,
\begin{align*}
\PP_{k,Q}(A)
&=\sum_{n\ge1}\PP_{k,Q}(A\cap\{T=n\})
=\sum_{n\ge1}\EE_\infty\!\left[\frac{d\PP_{k,Q}|_{\mathcal F_n}}{d\PP_\infty|_{\mathcal F_n}}
\1_{A\cap\{T=n\}}\right]\\
&=\sum_{n\ge1}\EE_\infty\!\left[e^{L_{k:n}}\1_{A\cap\{T=n\}}\right]
=\sum_{n\ge1}\EE_\infty\!\left[e^{L_{k:T}}\1_{A\cap\{T=n\}}\right]
=\EE_\infty\!\left[e^{L_{k:T}}\sum_{n\ge1}\1_{A\cap\{T=n\}}\right]\\
&=\EE_\infty\!\left[e^{L_{k:T}}\1_A\right].
\end{align*}

\noindent We now finish our proof by proving the \underline{third claim} of the lemma. By the second claim of Lemma~\ref{lem:density}, for any $B\in\mathcal F_T$, we have that,
$\PP_{k,Q}(B)=\EE_\infty[e^{L_{k,T}}\1_B]$. Applying this to $B=A\cap\{T\ge k\}$, and then dividing by $\PP_{k,Q}(T\ge k)=\PP_\infty(T\ge k)>0$ (the equality of these two probabilities follows from Lemma~\ref{lem:prefix}) yields the conditional identity and we are done.
\end{proof}

\begin{proof}[Proof of Lemma~\ref{lem:maxSLLN}]
We know that by the strong law of large numbers and the fact that $\EE[|Y_1|]<\infty$, there exists an almost sure event $\Omega_0$ on which,
\[
\lim_{m\to\infty}\frac{S_m(\omega)}{m}=\mu.
\]
Now, fix some $\omega\in\Omega_0$ and $\varepsilon\in(0,1)$, Then, there exists $N_\varepsilon(\omega)$ so that $S_m(\omega)\le (\mu+\varepsilon)m$ for all $m\ge N_\varepsilon(\omega)$. For this $n\ge N_\varepsilon(\omega)$,
\[
\max_{1\le m\le n}S_m(\omega)
=\max\Big\{\max_{1\le m< N_\varepsilon(\omega)}S_m(\omega),\
\max_{N_\varepsilon(\omega)\le m\le n}S_m(\omega)\Big\}
\le C_\varepsilon(\omega)+(\mu+\varepsilon)n,
\]
where the finite sum $C_\varepsilon(\omega):=\max_{1\le m< N_\varepsilon(\omega)}S_m(\omega)<\infty$.  Hence dividing by $n$ and taking the $\limsup$ gives us,
\[
\limsup_{n\to\infty}\frac{1}{n}\max_{1\le m\le n}S_m(\omega)\
\le\ \mu+\varepsilon.
\]
Since $\varepsilon>0$ is arbitrary, the first claim must hold almost surely. To show the probability bound, what we need to do is first take an $\eta>0$ and choose $\varepsilon\in(0,\eta)$. Now, on that same almost sure event, for all $n$ large enough,
\[
\max_{1\le m\le n}S_m \ \le\ C_\varepsilon+(\mu+\varepsilon)n < (\mu+\eta)n\ \le\ b_n,
\]
since $b_n/n\to \mu+\eta$.  This means that the events $\{\max_{1\le m\le n}S_m>b_n\}$ must occur only finitely often almost surely. Hence, their probabilities tend to 0.
\end{proof}

\begin{proof}[Proof of Lemma~\ref{lem:arl-block}]
Let $s_n:=\PP_\infty(T\ge n)$, ie, nonincreasing. We know by definition of expectations that $\EE_\infty[T]=\sum_{n\ge1}s_n$ holds  in $[0,\infty]$. So in each block,
$C_r:=\{(r-1)f+1,\dots,rf\}$ we have that,
\[
s_{(r-1)f+1}\ \ge\ \frac1f\sum_{n\in C_r} s_n,
\]
hence,
\[
\sum_{r\ge1} y_r=\sum_{r\ge1} s_{(r-1)f+1}\ \ge\ \frac1f\sum_{r\ge1}\sum_{n\in C_r} s_n
=\frac{1}{f}\sum_{n\ge1}s_n=\frac{\EE_\infty[T]}{f}\ \ge\ \frac{\gamma}{f}.
\]
Also $\sum_{r\ge1}x_r=1$ by disjointness; define $r_r:=x_r/y_r$ for $y_r>0$ and set $r_r:=0$ when $y_r=0$. Note that $x_r\le  y_r$ since $\{T\in C_r\}\subseteq\{T\ge (r-1)f+1\}$, so indeed $x_r=0$ if $y_r=0$. Then,
\[
\sum_{r\ge1} y_r r_r=\sum_{r\ge1} x_r=1.
\]
Restricting the average to indices with $y_r>0$ gives us,
\[
\sum_{r:\,y_r>0} \underbrace{\frac{y_r}{\sum_{j:\,y_j>0}y_j}}_{\text{weights}}\ r_r\ =\
\frac{1}{\sum_{j:\,y_j>0}y_j}\ \le\ \frac{1}{\sum_{j\ge1}y_j}\ \le\ \frac{f}{\gamma}.
\]
Therefore $\min_{r:\,y_r>0} r_r\le f/\gamma$. Pick $r^\star$ attaining this minimum; then $y_{r^\star}>0$ and $x_{r^\star}/y_{r^\star}\le f/\gamma$.
\end{proof}

\begin{proof}[Proof of Lemma~\ref{lem:par-asymp}]
First, write,
\[
x_\gamma:=\frac{(1-\varepsilon)\log\gamma}{I_\delta}, \qquad f_\gamma=x_\gamma-\theta_\gamma\quad \text{with } \theta_\gamma\in[0,1).
\]
\smallskip
Now, since $\log\gamma\to\infty$ as $\gamma\to\infty$, we have that,
\[
\frac{f_\gamma}{\log\gamma}
=\frac{x_\gamma-\theta_\gamma}{\log\gamma}
=\frac{1-\varepsilon}{I_\delta}-\frac{\theta_\gamma}{\log\gamma}
\;\xrightarrow[\gamma\to\infty]{}\; \frac{1-\varepsilon}{I_\delta},
\]
which follows from the fact that $0\le\theta_\gamma<1$ implies $\theta_\gamma/\log\gamma\to0$. Applying this above result,
\[
\frac{c_\gamma}{f_\gamma}
=\frac{b\log\gamma}{f_\gamma}
=\frac{b}{\;f_\gamma/\log\gamma\;}
\;\xrightarrow[\gamma\to\infty]{}\;
\frac{b}{(1-\varepsilon)/I_\delta}
=\frac{b}{1-\varepsilon}\,I_\delta.
\]
We assumed that $b>\frac{(1-\varepsilon)\mu_\delta}{I_\delta}$, so multiplying both sides by $\frac{I_\delta}{1-\varepsilon}$ gives us,
\[
\frac{b}{1-\varepsilon}\,I_\delta \;>\; \mu_\delta.
\]
\smallskip
Let $L:=\frac{b}{1-\varepsilon}\,I_\delta$; then $L>\mu_\delta$. Set $\eta:=(L-\mu_\delta)/2>0$. As we just showed, $\frac{c_\gamma}{f_\gamma}\to L$ clearly, so there exists $\gamma_0$ such that $\big|\frac{c_\gamma}{f_\gamma}-L\big|<\eta$ for all $\gamma\ge\gamma_0$. So it follows that,
\[
\frac{c_\gamma}{f_\gamma}\ \ge\ L-\eta \;=\; \frac{L+\mu_\delta}{2} \;=\; \mu_\delta+\eta
\qquad(\gamma\ge\gamma_0).
\]
It remains to show now that the term $e^{c_\gamma}f_\gamma/\gamma$ tends to $0$. Since $c_\gamma=b\log\gamma$, $e^{c_\gamma}=\gamma^b$, hence,
\[
\frac{e^{c_\gamma}f_\gamma}{\gamma}
=f_\gamma\,\gamma^{\,b-1}.
\]
Using the fact that $f_\gamma\le x_\gamma=\frac{1-\varepsilon}{I_\delta}\log\gamma$ for all $\gamma$,
\[
0\ \le\ \frac{e^{c_\gamma}f_\gamma}{\gamma}
\ \le\ \frac{1-\varepsilon}{I_\delta}\,(\log\gamma)\,\gamma^{\,b-1}
=\frac{1-\varepsilon}{I_\delta}\,\frac{\log\gamma}{\gamma^{\,1-b}}.
\]

\noindent From here, clearly we see that $\frac{\log\gamma}{\gamma^{\,1-b}}\to0$ and hence we have shown that $e^{c_\gamma}f_\gamma/\gamma\to0$ so we are done.
\end{proof}

\begin{proof}[Proof of Corollary~\ref{cor:minimax-cadd-lb}]
Take an arbitrary $Q\in\mathcal Q$ with $0<I(Q;\mathcal P)<\infty$. By Theorem~\ref{thm:cadd}, we have that,
\[
\liminf_{\gamma\to\infty}\ \frac{I(Q;\mathcal P)\,\mathcal C_Q(T_\gamma)}{\log\gamma}\ \ge\ 1.
\]
\noindent We know that for each $\gamma$, the supremum over $Q\in\mathcal Q$ dominates this value at fixed $Q$. Hence,
\[
\sup_{Q\in\mathcal Q}\frac{I(Q;\mathcal P)\,\mathcal C_Q(T_\gamma)}{\log\gamma} \ \ge\ \frac{I(Q;\mathcal P)\,\mathcal C_Q(T_\gamma)}{\log\gamma}.
\]

\noindent All we need to do now is take $\liminf_{\gamma\to\infty}$ and we get exactly our claim, hence we are done.
\end{proof}

\section{Ommitted Proofs for Section~\ref{sec:bounded-mean}}\label{sec:omittedproofs-two}
\begin{definition}\label{def:TV}
For two probability measures $P,Q$ on $(\mathcal X,\mathcal A)$, define the total variation distance by $\|P-Q\|_{\mathrm{TV}}:=\sup_{A\in\mathcal A}|P(A)-Q(A)|$.
\end{definition}
\begin{proof}[Proof of Lemma~\ref{lem:bm-pos-fin}]
\underline{We will first show that the KL is finite.} We know that since $q:=\EE_Q[X_1]>m$ by definition, we can choose $\varepsilon\in(0,1)$ such that $(1-\varepsilon)q\le m$. Let's define the mixture $P_\varepsilon := (1-\varepsilon)Q + \varepsilon\delta_0$. Of course then we know based on how we chose $\varepsilon$ that $\EE_{P_\varepsilon}[X_1]=(1-\varepsilon)\EE_Q[X_1]\le m$. Thus by definition, the mixture $P_\varepsilon\in\mathcal P_m$. In addition, for any measurable $A$ we have that $P_\varepsilon(A)=(1-\varepsilon)Q(A)+\varepsilon\delta_0(A) \ge (1-\varepsilon)Q(A)$. As a consequence, $Q(A)\le \frac{1}{1-\varepsilon}P_\varepsilon(A)$ for all $A$. So therefore by the Radon-Nikodym theorem, this domination inequality is in fact equivalent to $Q\ll P_\varepsilon$ and thus $\frac{dQ}{dP_\varepsilon}\le \frac{1}{1-\varepsilon}$ $Q$-almost-surely. Therefore, we get that,
\[
\KL(Q\|P_\varepsilon) =\EE_Q\left[\log\Big(\frac{dQ}{dP_\varepsilon}\Big)\right] \le \log\Big(\frac{1}{1-\varepsilon}\Big) <\infty.
\]
\noindent All we need to do now is take the infimum over all $P\in\mathcal P_m$ and we get that $\mathrm{KL}_{\inf}(Q;m)\le \KL(Q\|P_\varepsilon)<\infty$. \underline{We will now show strict positivity.} Let's prove this by contradiction. Meaning, assume to the contrary that $\mathrm{KL}_{\inf}(Q;m)=0$. This means that there exists a sequence $(P_n)_{n\ge 1}\subset\mathcal P_m$ such that $\KL(Q\|P_n)\to 0$. We can now use Pinsker's inequality to get that $\|Q-P_n\|_{\mathrm{TV}}\le \sqrt{\frac{\KL(Q\|P_n)}{2}}\longrightarrow 0$. Now, since $X_1\in[0,1]$ it's easy to see that $|\EE_Q[X_1]-\EE_{P_n}[X_1]|\le \|Q-P_n\|_{\mathrm{TV}}\longrightarrow0$. As such, $\EE_{P_n}[X_1]\to \EE_Q[X_1]=q>m$, which contradicts the fact that $\EE_{P_n}[X_1]\le m$ for all $n$. As we know that $P_n\in\mathcal P_m$. Therefore it must be the case that our initial supposition was false! Hence, indeed $\mathrm{KL}_{\inf}(Q;m)>0$.

\underline{We now prove the uniform gap statement.} Take a $Q\in\mathcal Q_{m+\Delta}$ and $P\in\mathcal P_m$. Then by definition we'd have that $\EE_Q[X_1]\ge m+\Delta$ and $\EE_P[X_1]\le m$. As such, we get that $\EE_Q[X_1]-\EE_P[X_1]\ge \Delta$. It is easy to see that $\EE_Q[X_1]-\EE_P[X_1]\le |\EE_Q[X_1]-\EE_P[X_1]|\le \|Q-P\|_{\mathrm{TV}}$. Hence $\|Q-P\|_{\mathrm{TV}}\ge \Delta$. Let's now apply Pinsker's inquality. If we do this, we get that $\Delta \le \|Q-P\|_{\mathrm{TV}}\le \sqrt{\frac{\KL(Q\|P)}{2}}$. As such, we get that $\KL(Q\|P)\ge 2\Delta^2$. Let's now take the infimum over $P\in\mathcal P_m$ to give us that $\mathrm{KL}_{\inf}(Q;m)\ge 2\Delta^2$. Then, taking the infimum over all $Q\in\mathcal Q_{m+\Delta}$ gives us exactly our lemma's claim for that part and hence we are done. 

\underline{It remains to now prove that $\mathrm{KL}_{\inf}(Q;m)= \sup_{\lambda\in[0,1]}\EE_Q[\log L_\lambda(X_1)]$.} Note that really this is well known in the literature, extensively showed by \citep{HondaTakemura2010}. However, we present a self contained proof here as we mentioned. Let us show the another fact first. Take some $\lambda\in(0,1)$ and $P\in\mathcal P_m$ with $Q\ll P$. Define the likelihood ratio shorthand as $r:=\frac{dQ}{dP}$. Since $X_1\in[0,1]$ and $\EE_P[X_1]\le m$, as we argued above, we necessarily have that $\EE_P[L_\lambda(X_1)]=1+\lambda\Big(\frac{\EE_P[X_1]}{m}-1\Big)\le 1$, obviously. Now we of course know that,
\begin{align*}
\KL(Q\|P)-\EE_Q[\log L_\lambda(X_1)] &=\EE_Q\!\left[\log\!\Big(\frac{r(X_1)}{L_\lambda(X_1)}\Big)\right] =\EE_P\!\left[r(X_1)\log\!\Big(\frac{r(X_1)}{L_\lambda(X_1)}\Big)\right].
\end{align*}

\noindent From here, we can easily use the log-sum inequality, or the fact that nonegativity of KL divergence holds to give us that,
\[
\EE_P\!\left[r\log\!\Big(\frac{r}{L_\lambda}\Big)\right] \ge \Big(\EE_P[r]\Big)\log\!\Big(\frac{\EE_P[r]}{\EE_P[L_\lambda]}\Big) =1\cdot\log\!\Big(\frac{1}{\EE_P[L_\lambda]}\Big) \ge 0,
\]
because we also know that $\EE_P[r]=\int \frac{dQ}{dP}dP=\int dQ=1$, and $\EE_P[L_\lambda]\le 1$. Hence, from all this we can easily conclude that \underline{$\KL(Q\|P)\ge \EE_Q[\log L_\lambda(X_1)]$.} Now let us return back to the task of showing $\mathrm{KL}_{\inf}(Q;m) = \sup_{\lambda\in[0,1]}\EE_Q[\log L_\lambda(X_1)]$. Our proof will proceed through splitting the task into showing two inequalities. First, we will prove the $\ge$ direction, and we will do so by first proving it excluding the boundaries and then handling the boundaries after. First, take some $\lambda\in(0,1)$. As we just proved, $\KL(Q\|P)\ge \EE_Q[\log L_\lambda(X_1)]$. Then, taking the infimum over $P\in\mathcal P_m$ gives us that $\mathrm{KL}_{\inf}(Q;m)\ge \EE_Q[\log L_\lambda(X_1)]$. Then taking the supremum over all $\lambda\in(0,1)$ gives us that $\mathrm{KL}_{\inf}(Q;m)\ge \sup_{\lambda\in(0,1)}\EE_Q[\log L_\lambda(X_1)]$. Let's now look at the boundaries. Since $L_0\equiv1$, we get that $\EE_Q[\log L_0(X_1)]=0,$ and since $0$ is a finite value it can indeed be captured by the supremum over the closed interval $(0,1)$. For the other case, at $\lambda=1$, $\log L_1(X_1)=\log(X_1/m)$ may indeed be $-\infty$ for two reasons: firstly, because $Q{\{0\}}>0$ (ie, $Q$ can take value $0$ with nonzero probability), or because $\EE_Q[\log X_1]=-\infty$, but in either case it cannot increase the supremum of course. Contrarily, if $\EE_Q[\log L_1(X_1)]>-\infty$, then by the dominated convergence theorem along $\lambda_n\uparrow 1$, we get that $\EE_Q[\log L_{\lambda_n}(X_1)]\to \EE_Q[\log L_1(X_1)]$, so that endpoint value is again captured by the supremum over $(0,1)$. Hence it must be the case that $\mathrm{KL}_{\inf}(Q;m)\ge \sup_{\lambda\in[0,1]}\EE_Q[\log L_\lambda(X_1)]$.

The rest of this proof will be dedicated to \underline{proving the $\le$ direction} so that we can achieve the desired equality. We will prove it through convex duality. Meaning, we will write the minimization defining $\mathrm{KL}_{\inf}(Q;m)$ as a convex program and then compute its Lagrange dual. To start, let us reduce the primal problem into densities that are with respect to $Q$. Consider some $P\in\mathcal P_m$ such that $\KL(Q\|P)<\infty$, so then necessarily we know that $Q\ll P$. Now, let $P^{\mathrm{ac}}$ be the absolutely continuous part of $P$ with respect to $Q$ in the Lebesgue decomposition $P=P^{\mathrm{ac}}+P^{\mathrm{s}}$, where here $P^{\mathrm{s}}\perp Q$ (ie, singular to $Q$). Further, let us define the shorthand $p:=\frac{dP^{\mathrm{ac}}}{dQ}$. Because of the fact that $P^{\mathrm{s}}\perp Q$, by definition of singularity, there must exist a measurable set $N$ where $Q(N)=0$ and $P^{\mathrm{s}}(N^c)=0$. Now, if $P^{\mathrm{ac}}(A)=0$, then $P(A\cap N^c)=P^{\mathrm{ac}}(A\cap N^c)+P^{\mathrm{s}}(A\cap N^c)=0$. So, we get that $Q(A\cap N^c)=0$ by the fact that $Q\ll P$. Now, notice that we can write $A=(A\cap N^c)\cup(A\cap N)$. We just showed that $Q(A\cap N^c)=0$. And, the set $A\cap N$ is a subset of $N$ of course, so $Q(A\cap N)$ must be $0$ since $Q(N)=0$. Since $Q(A)$ is nothing but a measure of the union of these two null-sets, it follows that $Q(A)$ is zero also. Therefore, $Q\ll P^{\mathrm{ac}}$ as we have proven that any set $A$ such that $P^{\mathrm{(ac)}}(A)=0$ implies that $Q(A)=0$.  It thus follows that $p>0$ $Q$-almost-surely. In addition on the $Q$-support, we have that $\frac{dQ}{dP}=\frac{dQ}{dP^{\mathrm{ac}}}=1/p$ as the singular part drops out. In other words, these derivatives are equal wherever $Q$ has mass. Therefore, we get that,
\[
\KL(Q\|P)=\EE_Q\!\left[\log\!\Big(\frac{dQ}{dP}(X_1)\Big)\right] =\EE_Q[-\log p(X_1)].
\]
\noindent Let us denote $c:=\EE_Q[p(X_1)]=P^{\mathrm{ac}}([0,1])\le 1$. By the mean-constraint on the nulls we know that,
\[
\EE_Q[X_1p(X_1)] =\int x\,P^{\mathrm{ac}}(dx) \le \int x\,P(dx)=\EE_P[X_1]\le m,
\]
since $P^{\mathrm{s}}$ contributes a nonnegative amount to $\EE_P[X_1]$ because recall that $X_1\in[0,1]$. What this means is that every feasible null $P$ induces a function $p$ satisfying four properties. First, that $p>0$ $Q$-almost-surely; second, that $\EE_Q[p(X_1)]\le 1$; third, that $\EE_Q[X_1p(X_1)]\le m$; and finally, that $\KL(Q\|P)=\EE_Q[-\log p(X_1)]$. 

\medskip
\noindent Conversely, consider some fixed measurable $p>0$, $Q$-almost-surely with $\EE_Q[p(X_1)]\le 1$ and $\EE_Q[X_1p(X_1)]\le m$. Now just so we can avoid any nuance when the mean constraint is indeed tight, let us take some $\delta\in(0,1)$ and define $p_\delta:=(1-\delta)p$. Then we know that $\EE_Q[p_\delta(X_1)]<1$ and $\EE_Q[X_1p_\delta(X_1)]<(1-\delta)m<m$. Let us now introduce ``slack'' variables for any remaining mass and mean difference needed to reach the full constraints: that is, let the mass slack be $s_\delta:=1-\EE_Q[p_\delta(X_1)]>0$, and mean slack be $\mathrm{slack}_\delta:=m-\EE_Q[X_1p_\delta(X_1)]>0$. We know that $Q$ has countable many atoms, by definition of $\sigma$-finite measure (which any probability measure must be). Because of this fact that there exist uncountably many atomless points, we know that for every $\varepsilon>0$ there must exist some $x\in(0,\varepsilon)$ with $Q(\{x\})=0$. To this end, let us choose some singular point $x_\delta>0$ such that $Q(\{x_\delta\})=0$ and $x_\delta \le \frac{\mathrm{slack}_\delta}{s_\delta}$. Now define $P_\delta(dx):=p_\delta(x)Q(dx)+s_\delta\delta_{x_\delta}(dx)$. Clearly, $P_\delta$ is indeed a probability measure. In addition, we have that,
\[
\EE_{P_\delta}[X_1] =\EE_Q[X_1p_\delta(X_1)] + s_\delta x_\delta \le \EE_Q[X_1p_\delta(X_1)] + \mathrm{slack}_\delta = m,
\]
so of course $P_\delta\in\mathcal P_m$. Finally, since $\delta_{x_\delta}$ is mass supported on a $Q$-null set, it's not going to affect $\frac{dQ}{dP_\delta}$ on the $Q$-support. Therefore, we have that,
\[
\KL(Q\|P_\delta)=\EE_Q[-\log p_\delta(X_1)] =\EE_Q[-\log p(X_1)]-\log(1-\delta).
\]
\noindent Now, if we just let $\delta\downarrow 0$ we can see that optimizing over all $P\in\mathcal P_m$ is equivalent to optimizing over such $p$. Thus, beautifully, we can rewrite our primal problem as,
\begin{equation}\label{eq:bm-primal}
\mathrm{KL}_{\inf}(Q;m) =\inf\left\{ \EE_Q[-\log p(X_1)]: \begin{array}{l} p>0\ \ Q\text{-a.s.},\\ \EE_Q[p(X_1)]\le 1,\\ \EE_Q[X_1p(X_1)]\le m \end{array} \right\}.   
\end{equation}

\noindent Clearly, this is a convex optimization problem in the variable $p$, since $p\mapsto -\log p$ is convex and the constraints are indeed linear. Furthermore, we know that Slater's condition of strict feasibility holds. Why? To see this, we can easily proceed as follows. Since $q=\EE_Q[X_1]>m$, choose any constant $c\in(0, m/q)$. Clearly, $p\equiv c$ will satisfy $p>0$, $\EE_Q[p]=c<1$, and $\EE_Q[X_1p]=cq<m$. Therefore, \eqref{eq:bm-primal} indeed is strictly feasible. Therefore, there is no duality gap and the dual optimum solution equals the primal optimal solution. As such, we are now ready to compute the Lagrange dual to solve this convex optimization problem. To begin, let us introduce multipliers $\alpha\ge 0$ and $\beta\ge 0$ for the constraints $\EE_Q[p]\le 1$ and $\EE_Q[X_1p]\le m$ respectively. Then, the Lagrangian becomes,
\[
\mathcal L(p;\alpha,\beta) :=\EE_Q\!\left[-\log p(X_1)+(\alpha+\beta X_1)p(X_1)\right]-\alpha-\beta m.
\]
\noindent Now, for some particular $(\alpha, \beta)$ pair, we minimize the Lagrangian $\mathcal L$ over $p>0$ pointwise: to begin, take some $x\in[0,1]$ and set $c(x):=\alpha+\beta x$. Consider $\varphi_x(p):= -\log p + c(x)p$ (where $p>0$ as above). If in fact $c(x)\le 0$, then necessarily that would mean that $\inf_{p>0}\varphi_x(p)=-\infty$ (send $p\to\infty$). So, we will restrict to parameters such that $c(X_1)>0$ $Q$-almost-surely; and in fact, in our setting it suffices to assume that $\alpha>0$, or more generally that $\alpha+\beta x>0$ $Q$-almost-surely. To this end, when $c(x)>0$, the derivative $\varphi_x'(p)=-1/p + c(x)$ will vanish at $p^\star(x)=1/c(x)$. So, the minimum value is simply $\varphi_x(p^\star(x))=\log c(x)+1$. Thus the dual function becomes,
\[
\inf_{p>0}\mathcal L(p;\alpha,\beta)=1+\EE_Q[\log(\alpha+\beta X_1)]-\alpha-\beta m,
\]
and the corresponding dual problem is simply,
\begin{equation}\label{eq:bm-dual-raw}
\sup_{\alpha\ge0,\ \beta\ge0:\, \alpha+\beta X_1>0\ Q\text{-a.s.}} \Big\{1+\EE_Q[\log(\alpha+\beta X_1)]-\alpha-\beta m\Big\}.  
\end{equation}

\noindent And, as we already explained, there is no duality gap, which therefore entails that the value of \eqref{eq:bm-dual-raw} equals that of the $\mathrm{KL}_{\inf}(Q;m)$. We will now conclude our proof by reducing \eqref{eq:bm-dual-raw} to the one-parameter family $L_\lambda$. Meaning, take any particular $(\alpha,\beta)$ with $\alpha>0$ and $\beta\ge 0$ and consider the scaling by $t>0$, $(\alpha,\beta)\mapsto (t\alpha,t\beta)$. So, the dual objective becomes,
\begin{align*}
1+\EE_Q[\log(t\alpha+t\beta X_1)] - t\alpha -t\beta m &=1+\log t + \EE_Q[\log(\alpha+\beta X_1)]-t(\alpha+\beta m).    
\end{align*}

\noindent We know that when we consider a particular $(\alpha,\beta)$ pair, the right hand side is nothing but a concave function of $t>0$, and is therefore maximized when $\frac{d}{dt}(\log t-t(\alpha+\beta m))=\frac{1}{t}-(\alpha+\beta m)=0$, or more simply when $t^\star = \frac{1}{\alpha+\beta m}$. Then, substituting $t^\star$ back in will give us the maximized value $\EE_Q[\log(\alpha + \beta X_1)]-\log(\alpha+\beta m)$. So, without loss of optimality we can restrict our attention to normalized parameters that satisfy $\alpha+\beta m=1$. Under such a normalization, the dual objective will reduce to $\EE_Q[\log(\alpha+\beta X_1)]$. Now, let us parameterize $\alpha+\beta m=1$ with $\lambda:=\beta m\in[0,1]$, so $\alpha=1-\lambda$, and $\beta = \lambda/m$. We then get that $\alpha+\beta X_1 = (1-\lambda)+\frac{\lambda}{m}X_1=L_\lambda(X_1)$. And, the dual value becomes $\sup_{\lambda\in[0,1]}\EE_Q[\log L_\lambda(X_1)]$. Of course since the dual optimum equals that of the primal, we obtain that,
\[
\mathrm{KL}_{\inf}(Q;m) \le \sup_{\lambda\in[0,1]}\EE_Q[\log L_\lambda(X_1)],
\]
and if we combine this with the first part of our proof, we indeed get equality and hence this concludes the proof. As we have shown all parts of the lemma, we are now done. 
\end{proof}

\begin{proof}[Proof of Lemma~\ref{lem:bm-hit}]
The first thing we need to show is that $\tau_u<\infty$ almost surely (ie, non-defective). Obviously we know that $\EE[|Y_1|]<\infty$ by boundedness. In addition, we assumed that $\EE[Y_1]=\mu>0$. By the strong law of large numbers, we know that $S_n/n\to\mu>0$ almost surely. Thus, $S_n\to+\infty$ almost surely. Therefore because $S_n$ is guaranteed to at some point always exceed the threshold $u$, we know that $\tau_u<\infty$ almost surely for every fixed $u>0$. 

We now need to establish that $\EE[\tau_u]<\infty$. Take some $u>0$ and define the bounded stopping rules $\tau_u^{(N)}:=\tau_u\wedge N$. We know that the process $M_n:=S_n -\mu n$ is a martingale with respect to the filtration $\mathcal F_n:=\sigma(Y_1,Y_2,\dots,Y_n)$. By optional stopping at the bounded time $\tau_u^{(N)}$, we have that $\EE[S_{\tau_u^{(N)}}-\mu \tau_u^{(N)}]= \EE[M_{\tau_u^{(N)}}] = \EE[M_0]=0$. As such, $\EE[S_{\tau_u^{(N)}}]=\mu\EE[\tau_u^{(N)}]$. Let's now bound $S_{\tau_u^{(N)}}$ from above. On the event where $\{\tau_u\le N\}$, we know that $S_{\tau_u - 1}<u$ and $Y_{\tau_u}\le b$, so $S_{\tau_u}=S_{\tau_u-1}+Y_{\tau_u}<u+b$. Now on the event where $\{\tau_u>N\}$, by definition of $\tau_u$ we have that $S_N<u$. Hence, $S_{\tau_u^{(N)}}=S_N<u<u+b$. In any case, we can be certain thus that with probability one $S_{\tau_u^{(N)}}<u+b$. Quickly taking expectations gives us that $\EE[S_{\tau_u^{(N)}}]\le u+b$ which means that for all $N$,
\[
\EE[\tau_u^{(N)}] = \frac{\EE[S_{\tau_u^{(N)}}]}{\mu} \le \frac{u+b}{\mu}.
\]
\noindent Now, if we let $N\to\infty$ and apply the monotone convergence theorem on $\tau_u^{(N)}\uparrow \tau_u$ we get that,
\[
\EE[\tau_u] = \lim_{N\to\infty}\EE[\tau_u^{(N)}]\le \frac{u+b}{\mu} < \infty.
\]
\noindent Thus, we have proven that the expectation is indeed finite: $\EE[\tau_u]<\infty$. We will now extend the result we have shown here from bounded stopping times to the actual unbounded stopping time $\tau_u$. In addition, we know that the martingale differences of $M_n=S_n-\mu n$ are bounded. That is, $|M_n-M_{n-1}|=|Y_n-\mu|\le \max\{|a-\mu|,|b-\mu|\}:=C<\infty$. So it follows that for each $N$, $|M_{\tau_u^{(N)}}|\le C\tau_u^{(N)}$ almost surely. Hence for each $N$ we know that $\sup_N \EE[|M_{\tau_u^{(N)}}|]\le C\EE[\tau_u]<\infty$. By the dominated convergence theorem, we know that $M_{\tau_u^{(N)}}\to M_{\tau_u}$ in $L^1$. Thus we get that $\EE[M_{\tau_u}]=\lim_{N\to\infty}\EE[M_{\tau_u^{(N)}}]=0$. Equivalently,
\begin{equation}\label{eq:wald-hit}
\EE[S_{\tau_u}] = \mu \EE[\tau_u].  
\end{equation}

\noindent Lastly, we know that the overshoot is also bounded. By definition $S_{\tau_u}\ge u$ and also $S_{\tau_u}<u+b$ (immediate by using our same above argument). Hence it follows that $u\le \EE[S_{\tau_u}]\le u+b$. Using \eqref{eq:wald-hit} gives us therefore that,
\[
\frac{u}{\mu} \le \EE[\tau_u]\le \frac{u+b}{\mu}.
\]
\noindent Now all we need to do is divide by $u$ and let $u\to\infty$ and we get by the squeeze theorem that $\EE[\tau_u]/u\to 1/\mu$, and so we are done.
\end{proof}

\begin{proof}[Proof of Theorem~\ref{thm:bm-opt}: Statements $1$ and $3$.]
We start with the first statement. Consider some particular but arbitrarily chosen $P\in\mathcal P_m$. First, we will show that for each $\lambda\in\Lambda$, the process $R^{(\lambda)}_n - n$ is a supermartingale under $P^\infty$. Indeed we know that by \eqref{eq:bm-SR-rec} and the tower property of expectation we get that,
\begin{align*}
\EE_{P^\infty}\!\left[R^{(\lambda)}_n\mid \mathcal F_{n-1}\right] &=\big(1+R^{(\lambda)}_{n-1}\big)\,\EE_{P^\infty}\!\left[L_\lambda(X_n)\mid \mathcal F_{n-1}\right].
\end{align*}

\noindent We know that under $P^\infty$, $X_n$ is independent of the sigma algebra $\mathcal F_{n-1}$, and that $\EE_P[X_1]\le m$, hence we get that,
\[
\EE_{P^\infty}[L_\lambda(X_n)\mid \mathcal F_{n-1}] =\EE_P[L_\lambda(X_1)] =1+\lambda\Big(\frac{\EE_P[X_1]}{m}-1\Big) \le 1.
\]
\noindent Hence it follows that $\EE_{P^\infty}[R^{(\lambda)}_n \mid \mathcal F_{n-1}] \le 1+R^{(\lambda)}_{n-1}$. Equivalently, $\EE_{P^\infty}[R^{\lambda}_n-n\mid \mathcal F_{n-1}] \le R^{(\lambda)}_{n-1}-(n-1)$. So, we know that $R^{(\lambda)}_n-n$ is indeed a supermartingale. Taking the convex combination gives us that $M_n-n=\sum_{\lambda\in\Lambda} w_\lambda R^{(\lambda)}_n-n$, which is also a supermartingale under $P^\infty$. Now let $T:=T^{\mathrm{BM}}_\gamma=\inf\{n\ge1:M_n\ge\gamma\}$. A defective stopping rule (namely, $T=\infty$) would imply that $\EE_{P^\infty}[T]\ge\gamma$ is immediate. So, wlog we can assume that $\EE_{P^\infty}[T]<\infty$. In particular, let us assume that $T<\infty$ almost surely. If for each $N\in\NN$, we apply optional stopping to the bounded stopping rule $T\wedge N$, we can see that $\EE_{P^\infty}[M_{T\wedge N}-(T\wedge N)]\le M_0 -0=0$. So, $\EE_{P^\infty}[M_{T\wedge N}]\le \EE_{P^\infty}[T\wedge N]$. Since $T<\infty$ almost surely, we have that $T\wedge N\uparrow T$ almost surely as $N\to\infty$. Furthermore, we know that $(M_n)_{n\ge 0}$ is adapted, and so this implies that $M_{T\wedge N}\to M_T$ almost surely as $N\to\infty$. We know also that $M_{T\wedge N}\ge 0$. Hence we can apply Fatou's lemma to get that,
\[
\EE_{P^\infty}[M_T] = \EE_{P^\infty}\!\left[\lim_{N\to\infty}M_{T\wedge N}\right] \le \liminf_{N\to\infty}\EE_{P^\infty}[M_{T\wedge N}] \le \liminf_{N\to\infty}\EE_{P^\infty}[T\wedge N] = \EE_{P^\infty}[T].
\]

\noindent Finally, by definition of $T$, we know that on $\{T<\infty\}$, we have that $M_T\ge\gamma$. Hence it follows that $\EE_{P^\infty}[T]\ge \EE_{P^\infty}[M_T]\ge \gamma$. Note that $P\in\mathcal P_m$ was arbitrary. So indeed we can conclude that \underline{$\inf_{P\in\mathcal P_m}\EE_{P^\infty}[T^{\mathrm{BM}}_\gamma]\ge\gamma$}, which completes our the proof of the first statement. 

\underline{It remains to now prove the last statement of the theorem.} Let's do it. To begin, take a $\Delta\in(0,1-m)$ and consider $Q\in\mathcal Q_{m+\Delta}$. Just to recap, we take $I_Q=\mathrm{KL}_{\inf}(Q;m)\in(0,\infty)$. We know that necessarily by Lemma~\ref{lem:bm-pos-fin} we have the uniform lower bound that $I(Q)\ge \underline I_\Delta := 2\Delta^2$ for all $Q\in\mathcal Q_{m+\Delta}$. Now, for $\lambda\in[0,1]$, let us define the function $g_Q(\lambda):=\EE_Q[\log L_\lambda(X_1)]$. Of course we know that by Lemma~\ref{lem:bm-pos-fin}, $I_Q=\sup_{\lambda\in[0,1]}g_Q(\lambda)$ and that $g_Q(0)=0$. Now take an arbitrary $\varepsilon\in(0,1/2)$. We're first going to show that there indeed exists a finite subset of betting fractions $\Lambda_{\Delta,\varepsilon}\subset\Lambda$ and a constant $w_{\Delta,\varepsilon}>0$ such that for every $Q\in\mathcal Q_{m+\Delta}$, there exists some $\lambda(Q)\in\Lambda_{\Delta,\varepsilon}$ where,
\begin{equation}\label{eq:unif-goodlambda}
g_Q(\lambda(Q))\ge (1-\varepsilon)I_Q \quad\text{and}\quad w_{\lambda(Q)}\ge w_{\Delta,\varepsilon}.  
\end{equation}
Let us now build this $\Lambda_{\Delta,\varepsilon}$. First define $\varepsilon':=\varepsilon/2\in(0,1/4)$. We know that $\log(1+u)\le u$ for $u>-1$. Hence, for any $\lambda\in[0,1]$ and $x\in[0,1]$, we have that $\log L_\lambda(x)=\log\Big(1+\lambda\left(\frac{x}{m}-1\right)\Big) \le \lambda\Big(\frac{x}{m}-1\Big) \le \lambda\Big(\frac{1}{m}-1\Big)$. Therefore, we get that for all $\lambda\in[0,1]$,
\begin{equation}\label{eq:g-upperlinear}
g_Q(\lambda)\le \lambda\Big(\frac{1}{m}-1\Big).
\end{equation}
Now take a particular $Q\in\mathcal Q_{m+\Delta}$. Since $I_Q=\sup_{\lambda\in[0,1]}g_Q(\lambda)$,  we can choose measurably a point $\lambda_Q\in(0,1)$ such that,
\begin{equation}\label{eq:approx-max}
g_Q(\lambda_Q)\ge (1-\varepsilon')I_Q.  
\end{equation}
If we combine \eqref{eq:approx-max} with \eqref{eq:g-upperlinear} we get the lower bound that $(1-\varepsilon')I(Q)\le g_Q(\lambda_Q)\le \lambda_Q\Big(\frac{1}{m}-1\Big)$. Hence, it follows that,
\[
\lambda_Q \ge (1-\varepsilon')\frac{I_Q}{\frac{1}{m}-1} \ge (1-\varepsilon')\frac{\underline I_\Delta}{\frac{1}{m}-1} =: \underline\lambda_{\Delta,\varepsilon} >0.
\]
Now let's set the mesh size as $\delta:=\frac{\varepsilon'\underline\lambda_{\Delta,\varepsilon}}{4}$. Now let $J:=\lfloor 1/\delta\rfloor$ so that $J\delta\le 1<(J+1)\delta$. For each of the $j\in\{1,\dots, J\}$ since $\Lambda$ is dense in $(0,1)$ we can easily choose a $\lambda_j\in \Lambda\cap\big(j\delta-\delta/2,\ j\delta\big]$. Furthermore, let us also take and pick one more point near $1$, $\lambda_{J+1}\in \Lambda\cap(1-\delta/2,\ 1)$. Having done this, let's define the finite set,
\[
\Lambda_{\Delta,\varepsilon}:=\{\lambda_1,\dots,\lambda_{J+1}\}\subset\Lambda, \qquad w_{\Delta,\varepsilon}:=\min_{\lambda\in\Lambda_{\Delta,\varepsilon}} w_\lambda >0.
\]
At this point having constructed the set, let us now take a particular but arbitrarily chosen $Q\in\mathcal Q_{m+\Delta}$ and consider a $\lambda_Q$ from \eqref{eq:approx-max}. Let $j^\star:=\lfloor \lambda_Q/\delta\rfloor$, so it follows that $j^\star\ge 1$ because we had that $\lambda_Q\ge \underline \lambda_{\Delta,\varepsilon}>\delta$. In addition, $j^\star\le J$ necessarily also holds because $\lambda_Q<1$ and $J=\lfloor 1/\delta \rfloor$. Further, by construction we have that $\lambda_{j^\star}\le j^\star\delta\le \lambda_Q$ and $\lambda_{j^\star}\ge j^\star\delta-\frac{\delta}{2}\ge \lambda_Q-\delta-\frac{\delta}{2}=\lambda_Q-\frac{3\delta}{2}$. Now, we know that $\delta=\varepsilon'\underline\lambda_{\Delta,\varepsilon}/4 \le \varepsilon'\lambda_Q/4$. As a consequence of this, we have that $\frac{3\delta}{2}\le \varepsilon'\lambda_Q$. Hence, we get that $\frac{\lambda_{j^\star}}{\lambda_Q}\ge 1-\varepsilon'$. By concavity of $g_Q$ alongside the fact that $g_Q(0)=0$ we get that for any $t\in[0,1]$ we have that $g_Q(t\lambda_Q)\ge t g_Q(\lambda_Q)$. Considering $t=\lambda_{j^\star}/\lambda_Q\in[1-\varepsilon',1]$ gives us,
\[
g_Q(\lambda_{j^\star})\ge \frac{\lambda_{j^\star}}{\lambda_Q}g_Q(\lambda_Q)\ge (1-\varepsilon')(1-\varepsilon')I_Q \ge (1-\varepsilon)I_Q,
\]
where the last inequality holds since $(1-\varepsilon')^2 \ge 1-\varepsilon$ whenever $\varepsilon'=\varepsilon/2$. Also of course we know that $w_{\lambda_{j^\star}}\ge w_{\Delta,\varepsilon}$ by construction. Therefore, \eqref{eq:unif-goodlambda} indeed holds with $\lambda(Q):=\lambda_{j^\star}\in\Lambda_{\Delta,\varepsilon}$. Let us now use this good component to bound the CADD uniformly as per the theorem statement. As we did before, consider a particular $Q\in\mathcal Q_{m+\Delta}$ and let $\lambda(Q)$ be defined as above. We know that $M_n\ge w_{\lambda(Q)}R_n^{\lambda(Q)}\ge w_{\Delta,\varepsilon}R_n^{(\lambda(Q))}$. Hence we have the pathwise domination,
\[
T^{\mathrm{BM}}_\gamma \le \inf\Big\{n\ge 1: R_n^{(\lambda(Q))}\ge \gamma/w_{\Delta,\varepsilon}\Big\} =: T^{(\lambda(Q))}_{\gamma/w_{\Delta,\varepsilon}}.
\]
We can now repeat our same post-change product argument that we used in our above pointwise argument. Doing this tells us that for every change time $k\ge 1$, $\mathcal C_Q(T^{\mathrm{BM}}_\gamma)\le \EE_Q\left[\tau_{u_\gamma}\right]$ and $u_\gamma:=\log(\gamma/w_{\Delta,\varepsilon})$. Here, $\tau_{u}:=\inf\{t\ge 1: \sum_{i=1}^t \log L_{\lambda(Q)}(X_i)\ge u\}$. Under $Q$ we know that the increments $Y_i$ are iid and satisfy, by \eqref{eq:unif-goodlambda}, $\EE_Q[Y_1]=g_Q(\lambda(Q))\ge (1-\varepsilon)I(Q)>0$. Let us now verify the bounded increment hypothesis that Lemma~\ref{lem:bm-hit} gives us. We know that $X_i\in[0,1]$ and $\lambda\in(0,1)$ and thus for every $x\in[0,1]$ we get that,
\[
1-\lambda \le L_\lambda(x)= (1-\lambda)+\frac{\lambda}{m}x \le (1-\lambda)+\frac{\lambda}{m}\le \frac{1}{m}.
\]
Therefore, it follows that $\log (1-\lambda) \le \log L_\lambda(x)\le \log(1/m)$. Now because $\lambda(Q)\in\Lambda_{\Delta,\varepsilon}$ and $\Lambda_{\Delta,\varepsilon}$ is a finite set, the lower bound indeed must be uniformly finite. That is, $a_{\Delta,\varepsilon}:=\min_{\lambda\in\Lambda_{\Delta,\varepsilon}}\log(1-\lambda)>-\infty$. And, $b_m:=\log(1/m)<\infty$. Hence, $Y_1\in[a_{\Delta,\varepsilon}, b_m]$ with probability one under $Q$. Thus, Lemma~\ref{lem:bm-hit} indeed applies. This is now remarkable. Because, the bound we proved in Lemma~\ref{lem:bm-hit} tells us then that for all $\gamma$,
\[
\EE_Q[\tau_{u_\gamma}]\le \frac{u_\gamma + b_m}{\EE_Q[Y_1]}\le \frac{\log(\gamma/w_{\Delta,\varepsilon})+b_m}{(1-\varepsilon)I_Q}.
\]
\noindent Multiplying all this by $I_Q$ and then dividing by $\log\gamma$ gives us that (recall above that we already showed that $C_Q(T^{\mathrm{BM}}_{\gamma})\le \EE_Q[\tau_{u_\gamma}]$),
\[
\sup_{Q\in\mathcal Q_{m+\Delta}} \frac{I_Q\mathcal C_Q(T^{\mathrm{BM}}_\gamma)}{\log\gamma} \le \frac{\log\gamma + \log(1/w_{\Delta,\varepsilon})+b_m}{(1-\varepsilon)\log\gamma}.
\]
\noindent Taking the $\limsup_{\gamma\to\infty}$ thus gives us,
\[
\limsup_{\gamma\to\infty} \sup_{Q\in\mathcal Q_{m+\Delta}}\frac{\mathrm{KL}_{\inf}(Q;m)\mathcal C_Q(T^{\mathrm{BM}}_\gamma)}{\log \gamma} \le \frac{1}{1-\varepsilon}.
\]
\noindent Now, we all know that $\varepsilon\in(0,1/2)$ was arbitrary. Hence, we can let $\varepsilon\downarrow 0$ to get the uniform upper bound as desired, $\le 1$. But! Notice how the corresponding uniform lower bound $\ge 1$ also follows from Corollary~\ref{cor:minimax-cadd-lb} applied with $\mathcal P=\mathcal P_m$ and $\mathcal Q=\mathcal Q_{m+\Delta}$. Therefore, 
\[
\lim_{\gamma\to\infty} \sup_{Q\in\mathcal Q_{m+\Delta}} \frac{\mathrm{KL}_{\inf}(Q;m)\mathcal C_Q(T^{\mathrm{BM}}_\gamma)}{\log\gamma} = 1.
\]
This was exactly our third item we needed to prove. Hence we are done with the proof of this theorem!
\end{proof}

\end{document}